\newtheorem{thmintr}{Theorem}
\newtheorem{prop}{Proposition}[subsection]
\newtheorem{thm}[prop]{Theorem}
\newtheorem{lem}[prop]{Lemma}
\theoremstyle{definition}
\newtheorem{rem}[prop]{Remark}
\newtheorem*{ack}{Acknowledgement}
\def\co{\colon\thinspace}
\newcommand{\AAA}{\mathcal A}
\newcommand{\C}{\mathbb C}
\newcommand{\rmd}{\mathrm d}
\newcommand{\rme}{\mathrm e}
\newcommand{\HH}{\mathcal H}
\newcommand{\rmi}{\mathrm i}
\newcommand{\LLL}{\mathscr L}
\newcommand{\bfp}{\mathbf p}
\newcommand{\bfq}{\mathbf q}
\newcommand{\R}{\mathbb R}
\newcommand{\TT}{\mathcal T}
\newcommand{\VV}{\mathcal V}
\newcommand{\bfx}{\mathbf x}
\newcommand{\bfy}{\mathbf y}
\newcommand{\Z}{\mathbb Z}
\newcommand{\bfz}{\mathbf z}
\newcommand{\ra}{\rightarrow}
\DeclareMathOperator{\Hess}{\mathrm{Hess}}
\DeclareMathOperator{\id}{\mathrm{id}}
\DeclareMathOperator{\st}{\mathrm{st}}
\DeclareMathOperator{\supp}{\mathrm{supp}}
\begin{document}

\author{Jan Eyll}
\author{Jonas Fritsch}
\author{Kai Zehmisch}
\address{Fakult\"at f\"ur Mathematik, Ruhr-Universit\"at Bochum,
Universit\"atsstra{\ss}e 150, D-44801 Bochum, Germany}
\email{Jan.Eyll@rub.de}
\email{Jonas.Fritsch@rub.de}
\email{Kai.Zehmisch@rub.de}

\title[Contactomorphic vertically convex domains]
{Contactomorphic vertically convex domains}

\date{21.01.2026}

\begin{abstract}
  We consider the standard Darboux space
  equipped with the radial symmetric contact form.
  We study co-orientation preserving contactomorphisms
  between relatively compact domains up to the boundary.
  We determine the contactomorphism classes among all
  strict vertically convex domains
  over a round ball in the Liouville hyperplane
  that are radially symmetric about the Reeb axis
  and whose boundary coincide
  along a neighbourhood of the common equator.
  The total invariant is
  the mean curvature of the bounding sphere
  at the umbilic points with the same sign.
  Replacing the Liouville hyperplane
  by codisc bundles of closed non-Besse
  Riemannian manifolds
  or finite symplectisations of closed
  non-Besse strict contact manifolds
  analogous results are formulated
  in terms of characteristic length
  and total characteristic action, resp.
\end{abstract}

\subjclass[2020]{53D35; 53D05, 37C27, 37J55, 57R17.}
\thanks{This work is part of a project in the SFB/TRR 191
{\it Symplectic Structures in Geometry, Algebra and Dynamics},
funded by the DFG}

\maketitle



\section{Introduction\label{sec:intro}}

Let $V$ be a smooth manifold
and consider a connected, compact hypersurface $S$
in $\R\times V$.
Assume that $S$ bounds a (unique) relatively compact domain.
The closure $D_S$ is called the {\bf domain of} $S$.

We assume that $S$ is {\bf strict vertically convex} in the following sense:
There exist smooth functions $f_{\pm}\co V_S\ra\R$
defined on an open subset $V_S$ of $V$
with $f_-<f_+$.
Denote the graphs of $f_{\pm}$ by $S^{\pm}$.
Both functions $f_{\pm}$ are assumed to extend
over the boundary $\partial V_S$ of $V_S$ continuously
with $f_-=f_+$ along $\partial V_S$.
We require $S$ to be the (disjoint) union of
the {\bf entrance set} $S^-$, the {\bf exit set} $S^+$
and the graph of $f_{\pm}|_{\partial V_S}$,
the so-called {\bf equatorial set} $\partial S^{\pm}$ of $S$.
In case $\partial S^{\pm}$ is a submanifold of $S$
we will say {\bf equator} for short and write $E=\partial S^{\pm}$.
The triple $(S,V_S,f_{\pm})$ is called a {\bf shape}
and $V_S$ the {\bf shadow set} of $S$.

If $V$ admits an exact symplectic form $\rmd\lambda$,
a preferred choice of a Liouville primitive $\lambda$
allows to form its contactisation $(\R\times V,\rmd b+\lambda)$
together with the contact structure 
$\xi_{\lambda}=\ker(\rmd b+\lambda)$.
A strict vertically convex hypersurface $S$
in $(\R\times V,\rmd b+\lambda)$
can be used to provide a boundary condition
to compact contact manifolds with boundary.
If a collar neighbourhood is required to be
contactomorphic to a collar neighbourhood of $S$
in $(D_S,\xi_{\lambda})$,
the diffeomorphism type naturally compares to $D_S$.
This was used
in order to determine the diffeomorphism type
under additional assumptions on the supported Reeb dynamics,
see \cite{bschz19,beck23,eh94,grz14,gz16b,kschz20,kwz22}.
To come up with the contactomorphism type
known tools are the classification of tight contact structures
on the $3$-ball (\cite{elia92}) and on a solid torus (\cite{mak98})
in terms of the characteristic foliation on the boundary.

In order to find examples
for which the contactomorphism type
of the domain of a strict vertically convex hypersurface
can be determined, we will use the following notion:
Two strict vertically convex hypersurfaces
$S$ and $T$ in $\R\times V$
are called to admit {\bf equivalent}
shapes $(S,V_S,f_{\pm})$ and $(T,V_T,g_{\pm})$
whenever $V_S=V_T$ and the germs of
continuous functions $f_+$ and $g_+$
as well as $f_-$ and $g_-$, resp,
coincide along $\partial V_S=\partial V_T$.
In particular, $S$ and $T$ coincide in a neighbourhood
of the common equatorial set.

Consider the standard contactisation
$\big(\R\times\R^{2n},\rmd b+\tfrac12(\bfx\rmd\bfy-\bfy\rmd\bfx)\big)$,
whose contact structure is denoted by $\xi_{\st}$.
A function $f$ defined on an open subset of $\R^{2n}$
is called {\bf radially symmetric} provided that $f$
is invariant under the action of the orthogonal group.
We will identify $f(\bfx,\bfy)$ with $f(r)$ for $r=|(\bfx,\bfy)|$.

\begin{thmintr}
\label{thmintr:contactbodies}
Let $(S,V_S,f_{\pm})$ and $(T,V_T,g_{\pm})$
be equivalent shapes in $\R\times\R^{2n}$
with shadow set $V_S=V_T$
equal to a round ball about $0$ in $\R^{2n}$.
Assume that $f_{\pm}$ and $g_{\pm}$ are all radially symmetric. 
Then the domains $(D_S,\xi_{\st})$ and $(D_T,\xi_{\st})$
are co-orientation preserving contactomorphic up to the boundary
if and only if $f''_+(0)=g''_+(0)$ and $f''_-(0)=g''_-(0)$.
\end{thmintr}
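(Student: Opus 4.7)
I prove the theorem in two directions.

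\textbf{Necessity.} The apex points $p_\pm^S=(f_\pm(0),0)$ are contact-intrinsically distinguished as the two points where $\xi_{\st}=T\partial D_S$; a direct computation with the radial pullback $\alpha_{\st}|_{S^\pm}$ shows this tangency locus consists of exactly these two points (the smooth equator carries $\partial_b$ in its tangent space whereas $\xi_{\st}$ does not). The label $\pm$ is recovered from the sign of $\alpha_{\st}$ on the inward normal, so a co-orientation preserving contactomorphism $\Phi\colon D_S\to D_T$ sends $p_\pm^S$ to $p_\pm^T$ and $S^\pm$ to $T^\pm$. The radial expansion $f_+(r)=f_+(0)+\tfrac12 f''_+(0)r^2+O(r^4)$ yields
\[
\alpha_{\st}|_{S^+}=f''_+(0)(\bfx\cdot d\bfx+\bfy\cdot d\bfy)+\tfrac12(\bfx\cdot d\bfy-\bfy\cdot d\bfx)+O(r^3),
\]
so the linear part at $p_+^S$ is represented by $A_S:=f''_+(0)I_{2n}+\tfrac12 J$ with $J$ the standard complex structure; likewise $A_T:=g''_+(0)I_{2n}+\tfrac12 J$. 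Writing $\Phi^*\alpha_{\st}=e^g\alpha_{\st}$ and $M:=d\Phi_{p_+^S}|_{T_{p_+^S}S^+}$, the leading-order identity $M^T A_T M=e^{g(p_+^S)}A_S$ splits into its symmetric and antisymmetric parts:
\[
g''_+(0)\,M^T M=e^{g(p_+^S)}f''_+(0)\,I,\qquad M^T J M=e^{g(p_+^S)}J.
\]
Comparing determinants of the second identity with positive-definiteness of $M^T M$ in the first (treating $f''_+(0)=0$ separately by observing that then $g''_+(0)M^T M=0$ forces $g''_+(0)=0$) forces $f''_+(0)=g''_+(0)$; the parallel argument at $p_-^S$ yields $f''_-(0)=g''_-(0)$.

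\textbf{Sufficiency.} Set $f_{\pm,t}:=(1-t)f_\pm+t g_\pm$. This family is radially symmetric, agrees with $f_\pm$ near $\partial V_S$ by shape equivalence, satisfies $f_{+,t}>f_{-,t}$ on $\Int(V_S)$, and keeps $f''_{\pm,t}(0)$ constant. I look for a $\U(n)$-equivariant time-dependent contact Hamiltonian $H_t=H_t(b,r)$ realising the induced boundary isotopy. A direct calculation gives
\[
X_H=\bigl(H-\tfrac{r}{2}H_r\bigr)\partial_b+\tfrac{H_b}{2}(\bfx\cdot\partial_\bfx+\bfy\cdot\partial_\bfy)+\tfrac{H_r}{r}(\bfx\cdot\partial_\bfy-\bfy\cdot\partial_\bfx),
\]
and the requirement that $X_{H_t}$ push the graph $b=f_{+,t}(r)$ at vertical speed $\dot f_{+,t}(r)$ reduces, with $G(r):=H_t(f_{+,t}(r),r)$, to the singular ODE
\[
G(r)-\tfrac{r}{2}G'(r)=\dot f_{+,t}(r).
\]
Matching even Taylor coefficients, the $r^2$-coefficient on the left vanishes identically while on the right it equals $\tfrac12(g''_+(0)-f''_+(0))=0$ by hypothesis; all higher even coefficients are then uniquely determined, so the ODE admits a smooth radial solution. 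Proceeding analogously on $S^-$, extending $H_t$ to a smooth $\U(n)$-invariant function on $\R\times\R^{2n}$, and cutting off near the equator (where the deformation is trivial) produces a compactly supported contact isotopy whose time-$1$ map is the required contactomorphism.

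\textbf{Main obstacle.} Both directions hinge on the same second-order compatibility at the apex: in necessity it is the only $f''$-dependent contribution to the 1-jet of the characteristic form, and in sufficiency it is precisely the condition that prevents a logarithmic term from appearing in the explicit solution of the singular ODE at $r=0$. The principal technical care lies in carrying out the determinant-sign comparison cleanly in the symmetric part of the necessity calculation and in verifying smoothness of $H_t$ across the Reeb axis in the sufficiency argument.
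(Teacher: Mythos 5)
Your necessity argument is correct and is essentially the paper's: the paper compares the linearisations $DX|_0=\pm\bigl(\tfrac12\mathbb{I}+f''_{\pm}(0)J\bigr)$ of a characteristic vector field at the two singularities and invokes Proposition~\ref{prop:phiinvonchrfol}, while you compare the $1$-jets of the restricted $1$-form $\alpha_{\st}|_{TS^{\pm}}$ at the same points; these differ only through the defining identity $i_X\Omega=\beta\wedge(\rmd\beta)^{n-1}$ and lead to the same matrix equation $M^TA_TM=e^{g}A_S$, which you resolve correctly by splitting into symmetric and antisymmetric parts.

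For sufficiency you take a genuinely different route from the paper: you directly seek a $\U(n)$-invariant ambient contact Hamiltonian $H_t(b,r)$ realising the linear interpolation $f_{\pm,t}=(1-t)f_{\pm}+tg_{\pm}$ of the boundary graphs, whereas the paper first produces a vertical, non-contact diffeomorphism $\phi$ with $\phi(D_S)=D_T$ (Lemma~\ref{lem:verticallydiffeomorphic}) and then corrects it by a Moser argument (Lemma~\ref{lem:intvecfield}, Proposition~\ref{prop:charisteqn}), arriving at the variant $h-\tfrac r2 h'=\tfrac r2 g'$ of your $G-\tfrac r2 G'=\dot f_{\pm,t}$; both have indicial exponent $2$ at $r=0$ and the same resonance condition. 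Your computation of $X_H$ and the reduction to the scalar ODE are correct, and your route is arguably more direct. The genuine gap is the claim that matching Taylor coefficients yields a smooth radial solution. Matching coefficients is a formal-obstruction check and does not produce a function; moreover, the solution you actually need is the particular one vanishing near the equator (so that the cut-off is legitimate), and its $r^2$-coefficient is dictated by that boundary condition rather than left free as your formal matching suggests. You must write down the solution explicitly,
\[
G_t^{\pm}(r)=r^2\int_r^{r_0}\frac{2\bigl(g_{\pm}(\tau)-f_{\pm}(\tau)\bigr)}{\tau^3}\,\rmd\tau\,,
\]
and then prove smoothness at $r=0$ using the hypothesis $g''_{\pm}(0)=f''_{\pm}(0)$; this is exactly where the paper deploys the Morse lemma and the root Lemma~\ref{lem:rootlem}, and your argument needs the same verification to be complete. (The gluing of the two branches of $H_t$ over $S^+$ and $S^-$, and the time-dependent cut-off, are also left implicit, but those are routine once the smooth $G_t^{\pm}$ are in hand.)
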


In other words,
vertically convex domains
with equivalent radially symmetric shapes are contactomorphic
precisely if the mean curvatures at the umbilic points of the same sign
coincide.
The constructive part uses a Moser integrating vector field,
so that in fact
an ambient compactly supported
contact isotopy relative to
a neighbourhood of the equator is obtained.
The induced isotopy of the boundary of the vertically convex domain
is through radially symmetric shapes.

In Theorem \ref{thm:similar} and Section \ref{subsec:symmsteinshadow}
we will consider {\it symmetric} shapes
with shadow sets of more general Stein type.
We will describe a perturbation method
in order to obtain shapes with contactomorphic domains.

To formulate an example we consider
a closed, connected Riemannian manifold $Q$.
The total space $V=T^*Q$ of the cotangent bundle $\pi\co T^*Q\ra Q$
is provided with the canonical Liouville $1$-form
$\lambda=\lambda_Q$,
which at $u\in T^*Q$ is given by $u\circ T\pi$.
The contactisation 
$\big(\R\times V,\rmd b+\lambda\big)$
is the $1$-jet bundle of $Q$ with
contact structure $\ker(\rmd b+\lambda)$
denoted by $\xi_Q$.

We call a smooth function $f$ on an open subset of $T^*Q$
{\bf fibred radially symmetric} provided
$f(u)$ can be identified with $f(r)$ for $r=|u|$
w.r.t.\ the dual metric.
Define the {\bf characteristic half length}
\[
\LLL(f_{\pm}):=
\pm\int_0^{r_0}
          \frac{f'_{\pm}(\tau)}{\tau}
\rmd\tau
\]
whenever $f=f_{\pm}$
continuously extends to the closed co-disc bundle
$D_{r_0}T^*Q$ of radius $r_0>0$
and defines the entrance or exit set
of a strict vertically convex hypersurface $S$
in $\R\times V$ via the graph.
Belonging to the exit/entrance set
determines the sign $\pm$.
The quantity $\LLL(f_{\pm})$
is derived from the characteristic foliation
on the graph of $f_{\pm}$
in $\big(\R\times V,\rmd b+\lambda\big)$.
Namely $|\LLL(f_{\pm})|$
is the length of a geodesic arc
that is obtained from
the projection of a half characteristic to $Q$
by deleting consecutive equal length subarcs
of opposite orientation,
see Sections \ref{subsec:chrfol} and
\ref{subsec:charhalflength}.
Define the {\bf characteristic length} to be
\[
\Sigma(S):=
\LLL(f_+)+\LLL(f_-)
\,.
\]

\begin{thmintr}
\label{thmintr:cobocotangentconstr}
Let $(S,V_S,f_{\pm})$ and $(T,V_T,g_{\pm})$
be equivalent shapes in $\R\times T^*Q$
with shadow set $V_S=V_T$
equal to the interior of $D_{r_0}T^*Q$.
Assume that $f_{\pm}$ and $g_{\pm}$ are
fibred radially symmetric and that
$\Sigma(S)=\Sigma(T)$.

Then the domains $(D_S,\xi_Q)$ and $(D_T,\xi_Q)$
are co-orientation preserving contactomorphic
up to the boundary.
Moreover, the existing contactomorphism
can be assumed
  	\begin{enumerate}
 	\item[(a)]
	to restrict to $\id\times\,\eta_{\theta_0}$
	in a neighbourhood of the equator,
	where $\eta_{\theta_0}$ is
	the normalised cogeodesic flow
	at time $\theta_0$, and
 	\item[(b)]
	to map $\{f_{\pm}(0)\}\times Q$
	to $\{g_{\pm}(0)\}\times Q$ via
	$\big(f_{\pm}(0),q\big)\mapsto
	\big(g_{\pm}(0),q\big)$, resp.
  	\end{enumerate}
\end{thmintr}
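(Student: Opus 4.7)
The strategy is a Moser-type deformation adapted to the cotangent bundle setting: the orthogonal symmetry of $\R^{2n}$ used in Theorem~\ref{thmintr:contactbodies} is replaced by fibred radial symmetry together with the normalised cogeodesic flow $\eta_\theta$ on $S^*Q$, and the umbilic Hessian $f''_\pm(0)$ is replaced by the characteristic half length $\LLL(f_\pm)$.

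First I would introduce polar coordinates in the fibres, writing $T^*Q\setminus Q\cong (0,\infty)\times S^*Q$ via $u\mapsto (|u|, u/|u|)$. In these coordinates the Liouville form decomposes into a radial part and a contact form on $S^*Q$ whose Reeb flow is $\eta_\theta$. A fibred radially symmetric graph $\{b=f_\pm(r)\}$ thus inherits a characteristic foliation whose $S^*Q$-component is $\eta_\theta$ with time-scale proportional to $f'_\pm(r)/r$; integrating from $0$ to $r_0$ recovers $\LLL(f_\pm)$, and explains why the equator carries a well-defined holonomy $\eta_{\theta_0}$ with $\theta_0$ determined by $\Sigma(S)$.

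Next I would build an interpolating family $(S_t,V_{S_t},h_{\pm,t})$ of fibred radially symmetric shapes, all with shadow set $\Int(D_{r_0}T^*Q)$, satisfying $S_0=S$, $S_1=T$, with $h_{\pm,t}=f_\pm$ near $r=r_0$ (using that $f_\pm=g_\pm$ there by equivalence), with $h_{\pm,t}(0)$ linear in $t$ between $f_\pm(0)$ and $g_\pm(0)$, and with $\Sigma(S_t)\equiv \Sigma(S)$ for all $t$. The last constraint is a single integral identity on each of $h_{+,t}$ and $h_{-,t}$, easily arranged by adjusting $h_{\pm,t}$ by small radial bumps supported in the interior of $(0,r_0)$. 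Then I would solve the Moser equation via an ansatz $X_t = \alpha_t\,\partial_b+\beta_t\,\partial_r+\gamma_t\,\widehat{\eta}$, where $\widehat{\eta}$ is the lift of the generator of $\eta_\theta$ to $T^*Q$; preservation of the contact hyperplane and tangency to $\partial D_{S_t}$ reduce under fibred radial symmetry to ODEs in $(r,t)$ which integrate explicitly, and a cutoff yields an ambient compactly supported contact isotopy. Conclusions (a) and (b) come from the boundary data baked into the interpolation.

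The main obstacle is the smooth extension of $X_t$ across the two singular loci of the polar decomposition. At the zero section, the factor $f'_\pm(r)/r$ is smooth in $r^2$ thanks to the even symmetry in $r$ of fibred radially symmetric profiles, so smoothness through $r=0$ is automatic once $h_{\pm,t}(0)$ is interpolated linearly. At the equator, where entrance and exit graphs meet tangentially and the characteristic foliation degenerates, the identity $\Sigma(S_t)\equiv \Sigma(S)$ is precisely what forces the equatorial holonomy to be constant in $t$; without it the $\gamma_t$-component would fail to extend continuously across $r=r_0$. This is where the hypothesis $\Sigma(S)=\Sigma(T)$ enters essentially, and it is the main technical difficulty.
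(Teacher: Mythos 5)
Your overall strategy — a Moser-type deformation exploiting fibred radial symmetry, with the characteristic-half-length replacing the umbilic Hessian and the cogeodesic flow playing the role of the rotation — is the right one, and your observation that the holonomy from the singular set to the equator is $\eta_\theta$ with time-scale $f'_\pm(r)/r$ matches the paper's analysis in Sections~5.3--5.5. However, there is a genuine gap in the way you handle the equator. You claim that $\Sigma(S_t)\equiv\Sigma(S)$ keeps the equatorial holonomy constant and that a cutoff then yields a \emph{compactly supported} ambient contact isotopy. This cannot be correct in general: if $\LLL(f_+)\ne\LLL(g_+)$ (which is allowed under the hypothesis $\Sigma(S)=\Sigma(T)$), then for a non-Besse $Q$ Theorem~\ref{thmintr:cobocotangentobstr}(ii) shows that \emph{no} contactomorphism that is the identity near the equator can take $D_S$ to $D_T$, since the identity certainly commutes with $(-1)$ and $P=0$ forces $\LLL(f_\pm)=\LLL(g_\pm)$. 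So your proposed isotopy would simply fail to exist. The constancy of $\Sigma(S_t)$ controls only the \emph{composed} return map $\mu_{S_t}=\gamma_{2\Sigma(S_t)/r_0}$, not the individual half-holonomies $\mu_{S_t^\pm}$, and it is the latter that must be respected by an equator-fixing isotopy.

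The paper resolves this by first applying the normalised cogeodesic twist of Section~\ref{subsec:normcogeotwists} with $\theta_0=\LLL(f_+)-\LLL(g_+)$. This is a \emph{globally defined strict contactomorphism}, not compactly supported, equal to $\id\times\eta_{\theta_0}$ near and beyond the equator, which preserves $\Sigma$ and reduces to the case $\LLL(f_\pm)=\LLL(g_\pm)$. Only \emph{after} this normalisation is a compactly supported Moser isotopy constructed (via the method of characteristics along the Liouville flow, Proposition~\ref{prop:charisteqn} and Theorem~\ref{thm:similar}), where the vanishing of $\int_0^{r_0}(g_\pm-f_\pm)'(\tau)/\tau\,\rmd\tau$ is exactly what lets the solution $h$ have compact support. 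Conclusion~(a) is then the twist visible near the equator, and conclusion~(b) follows because both the twist (with $\theta(0)=0$) and the Moser field restricted to the singular set are proportional to $X_\varrho$ (Remark~\ref{rem:singsetshift}). To repair your argument you should either insert this preliminary twist explicitly, or accept that the $\gamma_t$-component of your ansatz is \emph{non-vanishing and $r$-independent} near $r=r_0$ and recognise it as the generator of a normalised cogeodesic flow, which extends to a global strict contactomorphism — it is precisely this, not a cutoff, that produces the boundary behaviour $\id\times\eta_{\theta_0}$ claimed in~(a).
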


Theorem \ref{thmintr:cobocotangentconstr}
is based on a Moser argument analogous to
the {\it if part} of
Theorem \ref{thmintr:contactbodies}
after potentially applying
a normalised cogeodesic twist
considered in Section \ref{subsec:normcogeotwists}.
Observe, that the reparametrisation $\eta_{\theta/r_0}$
is the Reeb flow at time $\theta/r_0$
on the co-sphere bundle $S_{r_0}T^*Q$
with respect to the contact form $\alpha^{r_0}$
that is obtained from restricting $\lambda_Q$
to $T(S_{r_0}T^*Q)$.
The proof of
Theorem \ref{thmintr:cobocotangentconstr}
shows that if the equality
$\LLL(f_{\pm})=\LLL(g_{\pm})$
holds for the characteristic half length, resp.,
the time $\theta_0$
in Theorem \ref{thmintr:cobocotangentconstr} (a)
can be chosen to be zero,
so that the resulting contactomorphism 
fixes the equator pointwise.

To formulate a converse statement
to Theorem \ref{thmintr:cobocotangentconstr}
notice that by Section \ref{subsec:equatorpreservation}
any contactomorphism $(D_S,\xi_Q)\ra(D_T,\xi_Q)$
can be assumed to preserve the equator
$E=\{f_+(r_0)\}\times S_{r_0}T^*Q$
up to contact isotopy.
Such an equator preserving contactomorphism
restricts to a contactomorphism on $E$
equipped with $\ker\alpha^{r_0}$.
In Theorem \ref{thmintr:cobocotangentobstr}
we will derive contact invariants of contactomorphisms
that restrict to {\it strict} contactomorphisms on $(E,\alpha^{r_0})$.

For that recall,
that $Q$ is called {\bf Besse}
if all geodesics of $Q$ are closed.
By a result of Wadsley \cite{wad75}
this is equivalent
to the periodicity of the geodesic flow of $Q$.
Notice,
that the geodesic flow is periodic if and only if
$\eta_P$ commutes with the multiplication by $-1$
on cotangent vectors for a nonzero real number $P$.
The minimal period of the normalised cogeodesic flow $\eta_{\theta}$
equals $2P$ in this case.
If $Q$ is not Besse we will simply set $P=0$
in the following theorem:

\begin{thmintr}
\label{thmintr:cobocotangentobstr}
Let $(S,V_S,f_{\pm})$ and $(T,V_T,g_{\pm})$
be equivalent shapes in $\R\times T^*Q$
with shadow set $V_S=V_T$
equal to the interior of $D_{r_0}T^*Q$.
Assume that $f_{\pm}$ and $g_{\pm}$
are fibred radially symmetric
and that there exists a co-orientation preserving
contactomorphism $\varphi$ of the domains
$(D_S,\xi_Q)$ and $(D_T,\xi_Q)$
up to the boundary.
Assume further that
$\varphi$ restricts to a strict contactomorphism
$\varepsilon$ on the equator $\{f_+(r_0)\}\times S_{r_0}T^*Q$
with respect to $\alpha^{r_0}$.
  	\begin{enumerate}
 	\item[(i)]
	Then there exists an integer $k$
	such that
	\[
	\Sigma(S)=\Sigma(T)+kP
	\,.
	\]
 	\item[(ii)]
	If $\varepsilon$	commutes with the fibre multiplication by $-1$,
	then there exist integers $k_{\pm}$ such that
	\[
	\LLL(f_{\pm})=\LLL(g_{\pm})+k_{\pm}P
	\,.
	\]
	Hence, we obtain $k=k_++k_-$ in (i).
 	\item[(iii)]
	If $\varphi$ restricts to a map
	$\big(g_{\pm}(0),\iota\big)$ from
	$\{f_{\pm}(0)\}\times Q$ to $\{g_{\pm}(0)\}\times Q$
	such that $\iota\circ\pi=\pi\circ\varepsilon$,  
	then there exist integers $k_{\pm}$ such that
	\[
	\LLL(f_{\pm})=\LLL(g_{\pm})+2k_{\pm}P
	\,.
	\]
	Hence, we obtain $k=2(k_++k_-)$ in (i).
  	\end{enumerate}
Here $2P$ is the minimal period
of the geodesic flow if $Q$ is Besse;
zero otherwise.
\end{thmintr}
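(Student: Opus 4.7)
The strategy is to translate the existence of $\varphi$ into algebraic constraints on the characteristic foliations of $S$ and $T$ at the common equator $E=\{f_+(r_0)\}\times S_{r_0}T^*Q$, and to read off the three divisibility statements from the commutation relations of the Reeb flow $\eta_\theta$, the fibre antipodal $\sigma\co u\mapsto -u$, and the strict restriction $\varepsilon$.

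The first step is to set up a ``first return through the umbilic'' map $\Phi_S^\pm\co E\to E$. By Section~\ref{subsec:chrfol}, each characteristic leaf on $S^\pm$ joins a unique point of $E$ to a single point of the umbilic $\{f_\pm(0)\}\times Q$ asymptotically, with a well-defined limit direction in $S_{r_0}T^*Q$. For $p\in E$, let $\Phi_S^\pm(p)$ be the unique point whose $S^\pm$-leaf reaches the same umbilic point with the opposite limit direction, so that the two leaves glue to a smooth curve through the umbilic. Using fibred radial symmetry together with the definition of $\LLL(f_\pm)$ one derives
\[
\Phi_S^+=\sigma\circ\eta_{2\LLL(f_+)},\qquad\Phi_S^-=\sigma\circ\eta_{-2\LLL(f_-)};
\]
the identity $\eta_\theta\sigma=\sigma\eta_{-\theta}$ combined with $\sigma^2=\id$ then gives $\Phi_S^-\circ\Phi_S^+=\eta_{2\Sigma(S)}$, and analogously for $T$ with $g_\pm$ in place of $f_\pm$.

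Since $\varphi$ is co-orientation preserving, it maps the characteristic foliation of $S$ bijectively to that of $T$, sends umbilic to umbilic, and preserves the ``opposite limit direction at a common umbilic point'' pairing, all of which are contact-intrinsic. Combined with $\varphi|_E=\varepsilon$ this yields $\varepsilon\circ\Phi_S^\pm=\Phi_T^\pm\circ\varepsilon$, while strictness $\varepsilon^*\alpha^{r_0}=\alpha^{r_0}$ makes $\varepsilon$ commute with $\eta_\theta$ for every $\theta$. The three parts then follow algebraically. For~(i), composing the $\pm$ intertwiners and commuting $\varepsilon$ through $\eta$ yields $\eta_{2\Sigma(S)}=\eta_{2\Sigma(T)}$; the minimal period $2P$ of $\eta_\theta$ (with $P=0$ outside the Besse case) forces $\Sigma(S)-\Sigma(T)\in P\Z$. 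For~(ii), the hypothesis $\varepsilon\sigma=\sigma\varepsilon$ lets me cancel the $\sigma$'s in the individual $\pm$ intertwiners, producing $\eta_{2\LLL(f_\pm)}=\eta_{2\LLL(g_\pm)}$ and hence $\LLL(f_\pm)-\LLL(g_\pm)\in P\Z$; summation gives $k=k_++k_-$. For~(iii), applying $\varphi$ to the umbilic endpoint of a half characteristic and using $\iota\circ\pi=\pi\circ\varepsilon$ together with $\varepsilon\eta=\eta\varepsilon$ leads to
\[
\pi\bigl(\eta_{\LLL(f_\pm)-\LLL(g_\pm)}(\varepsilon(u))\bigr)=\pi(\varepsilon(u))\qquad\text{for all }u\in E,
\]
so $\eta_{\LLL(f_\pm)-\LLL(g_\pm)}$ projects to the identity on $Q$, which in the Besse case requires a full period $\LLL(f_\pm)-\LLL(g_\pm)\in 2P\Z$ and in the non-Besse case forces equality; summation yields $k=2(k_++k_-)$.

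The hard part will be the first step: the characteristic field vanishes along the umbilic, so the return ``through'' the umbilic is not the first return of any smooth flow and has to be set up carefully via the asymptotic behaviour of the leaves at the zero section. Verifying the explicit form $\sigma\circ\eta_{\pm 2\LLL(f_\pm)}$ from the fibred radial symmetry and from the identification of $\eta_\theta$ with the Reeb flow of $\alpha^{r_0}$ is what carries the theorem; everything after that is algebraic manipulation of the three operators $\sigma$, $\eta$, and $\varepsilon$.
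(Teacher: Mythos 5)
Your proposal is essentially the paper's own proof. Your return map $\Phi^{\pm}_S$ is exactly the involution $\mu_{S^{\pm}}$ defined in Section~\ref{subsec:equatorpresvinv} via the doubled half characteristic through $\{f_{\pm}(0)\}\times Q$; your explicit formulas $\Phi^{+}_S=\sigma\circ\eta_{2\LLL(f_+)}$, $\Phi^{-}_S=\sigma\circ\eta_{-2\LLL(f_-)}$ agree with the paper's $\mu_{S^{\pm}}=(-1)\circ\gamma_{\pm 2\LLL(f_{\pm})/r_0}$ on $E$, and your composite equals $\mu_S=\gamma_{2\Sigma(S)/r_0}$. The intertwining relations $\varepsilon\circ\Phi^{\pm}_S=\Phi^{\pm}_T\circ\varepsilon$ (from Proposition~\ref{prop:phiinvonchrfol} and equator preservation) and the commutation of $\varepsilon$ with $\eta_{\theta}$ and with $\sigma$ are exactly what the paper uses to deduce $\mu_S=\mu_T$ and $\mu_{S^{\pm}}=\mu_{T^{\pm}}$ in parts~(i) and~(ii).

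One step is genuinely glossed. In~(iii) you assert that $\pi\circ\eta_{\tau}=\pi$ on $E$ ``requires a full period'' $\tau\in 2P\Z$, but this is not immediate: $\eta_{\tau}$ could in principle preserve all fibres of $\pi\colon E\to Q$ without being the identity, so one cannot simply read off divisibility by the minimal period of $\eta$ from the projected condition. The paper closes this by first invoking \cite[Prop.~7.9(b)]{bes78} (Wadsley) to conclude the flow is periodic of some minimal period $2P/r_0$, and then choosing a point on a geodesic of minimal period that is \emph{not} a transverse self-intersection (finitely many by the convexity-radius argument of \cite[Lemma~7.10]{bes78}); at such a point $q(\tau)=q(0)$ forces $\dot q(\tau)=\dot q(0)$, hence $\tau\in(2P/r_0)\Z$. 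Also, where you anticipate setting up the return map via the asymptotics of leaves at the zero section, the paper instead produces a smooth parametrisation of the doubled half characteristic over $r\in[-r_0,r_0]$ by the substitution $\Psi_r(u_0)=r\,\gamma_{L^{\pm}(r)}(u_0/r_0)$ and the Morse lemma of Section~\ref{subsec:mlandcons}; this sidesteps the degeneracy of the characteristic field at the singular set more cleanly than an asymptotic analysis would.
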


In Section \ref{subsec:normcogeotwists}
we will construct contactomorphisms
between domains in $\R\times T^*Q$
that restrict to $\varepsilon=\eta_{\theta_0}$ on the equator,
send $\{f_{\pm}(0)\}\times Q$
to $\{g_{\pm}(0)\}\times Q$ via the identity map $\iota$
and satisfy $\LLL(f_{\pm})-\LLL(g_{\pm})=\pm\theta_0$
for any given value $\theta_0$.
In particular,
$\Sigma(S)=\Sigma(T)$ for all these examples
as claimed in Theorem \ref{thmintr:cobocotangentobstr} (i).
This shows that the assumption
in Theorem \ref{thmintr:cobocotangentobstr} (ii)
is essential.
Furthermore
taking $Q$ to be Besse with minimal period $2P$
and $\theta_0=kP$ shows that all integer multiples
in Theorem \ref{thmintr:cobocotangentobstr} (ii)
appear, see Remark \ref{rem:bessetwist}.
Observe that for $k$ even
$\eta_{kP}$ is the identity on the equator,
for $k$ odd it is not.

Involving Dehn--Seidel twists on $T^*S^n$
over the unit sphere $S^n\subset\R^{n+1}$,
whose geodesic flow is of period $2\pi$,
there exist contactomorphic domains in $\R\times T^*S^n$
as in Theorem \ref{thmintr:cobocotangentobstr} (ii)
with $\LLL(f_{\pm})-\LLL(g_{\pm})=\pm(k-j)\pi$
for any $j,k\in\Z$,
see Remark \ref{rem:bessetwist}.
The constructed contactomorphisms
reparametrise the equator via $\varepsilon=\eta_{k\pi}$
and map $\{f_{\pm}(0)\}\times S^n$ to $\{g_{\pm}(0)\}\times S^n$ 
via $\iota$ equal to $j$-times the antipodal map on $S^n$.
In particular, $\varepsilon$ is a bundle map over $\iota$
with respect to $\pi$
as in Theorem \ref{thmintr:cobocotangentobstr} (iii)
provided that $j\equiv k$ modulo $2$.

Interpolating the geometry of
Theorem \ref{thmintr:contactbodies}
and Theorem
\ref{thmintr:cobocotangentconstr} and \ref{thmintr:cobocotangentobstr}
we will consider shadow sets with disconnected boundaries,
cf.\ Section \ref{subsec:steintypesympl}:
Let $(M,\alpha)$ be a connected, closed
strict contact manifold,
which defines an exact symplectic manifold $(V,\lambda)$
via its symplectisation $(\R\times M,\rme^a\alpha)$.
Here $a\in\R$ is the coordinate in $\R$-direction.
The contactisation on $\R\times(\R\times M)$
has contact structure $\ker(\rmd b+\rme^a\alpha)$
denoted by $\xi_{\alpha}$.

A function $f$ defined
on $(a_1,a_0)\times M$ for real numbers $a_1<a_0$
is called $\R$-{\bf symmetric} provided that $f\equiv f(a)$.
Define the {\bf characteristic action}
\[
\AAA(f_{\pm}):=
\pm\int_{a_1}^{a_0}
\rme^{-\tau}
f'_{\pm}(\tau)\,
\rmd\tau
\]
whenever $f=f_{\pm}$
continuously extends to
$[a_1,a_0]\times M$
and defines the entrance or exit set
of a strict vertically convex hypersurface $S$
in $\R\times(\R\times M)$.
The quantity $\AAA(f_{\pm})$
is the action of the Reeb arc in $(M,\alpha)$
obtained from a characteristic connecting
the components of the equator
$E=\{a_1,a_0\}\times M$
on the graph of $f_{\pm}$
by projection,
see Section \ref{subsec:acandchar}.
Define the {\bf total characteristic action} to be
\[
\TT(S):=
\AAA(f_+)+\AAA(f_-)
\,.
\]

\begin{thmintr}
\label{thmintr:contactbodiessympl}
Let $(S,V_S,f_{\pm})$ and $(T,V_T,g_{\pm})$
be equivalent shapes in the contactisation
of the symplectisation of $(M,\alpha)$
with shadow set $V_S=V_T$
equal to $(a_1,a_0)\times M$ for real numbers $a_1<a_0$.
Assume that $f_{\pm}$ and $g_{\pm}$ are $\R$-symmetric.
  	\begin{enumerate}
 	\item[(a)]
	If $\TT(S)=\TT(T)$,
	then the domains $(D_S,\xi_{\alpha})$
	and $(D_T,\xi_{\alpha})$
	are co-orientation preserving contactomorphic
	up to the boundary.
	Moreover,
	the existing contactomorphism can be assumed
	to restrict to the identity
	in a neighbourhood of the equator
	$\{a_1,a_0\}\times M$.
 	\item[(b)]
	Assume that there exists a co-orientation preserving
	contactomorphism $\varphi$ of the domains
	$(D_S,\xi_{\alpha})$ and $(D_T,\xi_{\alpha})$
	up to the boundary.
  		\begin{enumerate}
 		\item[(i)]
		If $\varphi$ restricts to a strict contactomorphism
		on one of the components of the equator
		$\{a_1,a_0\}\times M$ with respect to $\alpha$,
		then there exists an integer $k$
		such that
		\[
		\TT(S)=\TT(T)+2kP
		\,.
		\]
 		\item[(ii)]
		If $\varphi$ preserves each component
		of the equator $\{a_1,a_0\}\times M$ such that
		the projection to $(M,\alpha)$ coincides
		with a single strict contactomorphism $\varepsilon$,
		then there exist integers $k_{\pm}$ such that
		\[
		\AAA(f_{\pm})=\AAA(g_{\pm})+2k_{\pm}P
		\,.
		\]
  		\end{enumerate}
	Here $2P$ is the minimal period
	of the Reeb flow on $(M,\alpha)$
	in case $(M,\alpha)$ is Besse;
	zero otherwise.
  	\end{enumerate}
\end{thmintr}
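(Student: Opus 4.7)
The plan is to mirror the cotangent-bundle arguments of Theorems \ref{thmintr:cobocotangentconstr} and \ref{thmintr:cobocotangentobstr}, adapting the normalised cogeodesic twist and the half-characteristic length computation to the symplectisation. For (a), I would first treat the special case $\AAA(f_\pm) = \AAA(g_\pm)$ by a Moser argument analogous to the \emph{if} part of Theorem \ref{thmintr:contactbodies}. Choose an $\R$-symmetric interpolation $f^t_\pm$ between $f_\pm$ and $g_\pm$ with $\AAA(f^t_\pm)$ constant in $t$ and $f^t_\pm \equiv f_\pm$ near $\partial V_S$. Since the Reeb vector field of $\rmd b + \rme^a\alpha$ is $\partial_b$, an $\R$-symmetric contact Hamiltonian $H_t = H_t(b,a)$ generates
\[
X_{H_t} \;=\; (H_t - \partial_a H_t)\,\partial_b \;+\; \partial_b H_t\,\partial_a \;+\; \rme^{-a}\partial_a H_t\cdot R_\alpha,
\]
where $R_\alpha$ is the Reeb field of $(M,\alpha)$, and the Moser boundary conditions at $b = f^t_\pm(a)$ reduce to the linear ODEs $u_\pm'(a) = u_\pm(a) - \partial_t f^t_\pm(a)$ for $u_\pm(a) := H_t(f^t_\pm(a), a)$. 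Starting from $u_\pm(a_1) = 0$ near one component of the equator, constancy of $\AAA(f^t_\pm)$ forces $u_\pm(a_0) = 0$ as well, so $H_t$ extends smoothly to a function vanishing in a neighbourhood of both components of the equator. In the general case $\TT(S) = \TT(T)$ with a splitting discrepancy, I would first absorb it by a preliminary Dehn--Seidel-type twist built from the periodic Reeb flow of $(M,\alpha)$ in the Besse case, supported in a neighbourhood of an interior slice $\{a_*\} \times M$ and shifting $(\AAA(f_+), \AAA(f_-))$ by $(+2k_+P, -2k_+P)$ for a suitable integer $k_+$.

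For (b), the key input from Section \ref{subsec:acandchar} is that a characteristic on $S^\pm$ projects to a Reeb arc on $(M,\alpha)$ of signed time $\mp\AAA(f_\pm)$ connecting the two components of the equator. Contact invariance of the characteristic foliation together with co-orientation preservation force $\varphi(S^\pm) = T^\pm$. For (ii), comparing the endpoints of a characteristic on $S^\pm$ with those of its $\varphi$-image in $T^\pm$ gives the commutation $\varepsilon \circ \psi_{\mp\AAA(f_\pm)} = \psi_{\mp\AAA(g_\pm)} \circ \varepsilon$ on $M$, where $\psi_t$ denotes the Reeb flow of $\alpha$; since $\varepsilon^*\alpha = \alpha$ implies $\varepsilon \circ \psi_t = \psi_t \circ \varepsilon$, this collapses to $\psi_{\AAA(f_\pm)-\AAA(g_\pm)} = \id_M$, hence $\AAA(f_\pm) - \AAA(g_\pm) \in 2P\Z$. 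For (i), I would concatenate a characteristic on $S^+$ with one on $S^-$ to form a path in $S$ starting and ending on the single equator component on which $\varphi$ is strict; its $M$-projection has total Reeb time $-\TT(S)$, while the $\varphi$-image path in $T$ has time $-\TT(T)$, and the same commutation argument yields $\psi_{\TT(S)-\TT(T)} = \id_M$ and thus $\TT(S) - \TT(T) \in 2P\Z$.

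The main technical obstacle is the Besse twist in part (a): producing an ambient contactomorphism of $(\R \times V, \xi_\alpha)$ supported near an interior slice and identity near the equator that shifts $\AAA(f_+)$ and $\AAA(f_-)$ by opposite integer multiples of $2P$. Consistency with part (b)(ii) dictates that such a twist can exist only in the Besse case, and the construction will mirror the Dehn--Seidel prescription for Besse contact manifolds used to generate the examples in Remark \ref{rem:bessetwist}.
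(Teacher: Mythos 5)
Your part (b) matches the paper's argument in Section \ref{subsec:eqmono}: use Proposition \ref{prop:phiinvonchrfol} to propagate characteristics to characteristics, read off the equator monodromy $\kappa_S = \eta_{\TT(S)}$ (respectively $\kappa_{S^\pm} = \eta_{\AAA(f_\pm)}$) from their endpoints, and exploit that a strict contactomorphism commutes with the Reeb flow. The sign bookkeeping and the concatenation argument for (i) are the same in spirit.

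Part (a) is where you diverge from the paper, and there is a genuine gap. You split the proof into (1) the special case $\AAA(f_\pm)=\AAA(g_\pm)$, handled by Moser, and (2) reduction of the general case $\TT(S)=\TT(T)$ to the special case via a twist supported near an interior slice $\{a_*\}\times M$ that is the identity near the equator and shifts $\big(\AAA(f_+),\AAA(f_-)\big)$ by $(+2k_+P,-2k_+P)$. But the only way such a twist can be the identity near the equator is if its cross-slice monodromy is a full Reeb period, so it can only move $\AAA(f_\pm)$ by elements of $2P\Z$. The hypothesis $\TT(S)=\TT(T)$ allows the splitting discrepancy $\AAA(f_+)-\AAA(g_+) = -\big(\AAA(f_-)-\AAA(g_-)\big)$ to be an arbitrary real number, so step (2) fails whenever this number is not in $2P\Z$ — and in the non-Besse case $P=0$, it fails for every nonzero discrepancy. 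You acknowledge that the twist can exist only in the Besse case but leave the non-Besse case (and the non-$2P\Z$ Besse case) unaddressed; the theorem makes no Besse assumption.

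The paper avoids the reduction entirely. It runs a single Moser argument along the Liouville flow $\psi_s(a,w)=(s+a,w)$ of $\partial_a$ (admissible here because the shadow set $(a_1,a_0)\times M$ has no skeleton and $g_\pm - f_\pm$ is $\R$-symmetric), integrating
\[
h_\pm(a)=\rme^a\int_a^{a_0}\rme^{-\tau}\big(g_\pm-f_\pm\big)'(\tau)\,\rmd\tau
\]
from $\{a_0\}\times M$ downward. The $\pm$-branches vanish near $\{a_0\}\times M$ automatically, and the only constraint is the matching $h_+(a_1)=h_-(a_1)$ at the opposite boundary component — which, unwinding, is exactly $\TT(S)=\TT(T)$. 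So $\TT(S)=\TT(T)$ enters as a compatibility condition for the two branches of the Moser Hamiltonian at one end, not as an equalisation of each individual $\AAA(f_\pm)$ with $\AAA(g_\pm)$; the Besse property plays no role in (a). Your interpolation ODE $u_\pm'=u_\pm-\partial_t f^t_\pm$ is essentially the same characteristic ODE after integrating by parts, and in the special case it proves the same thing — but the general case needs the matching-at-one-end formulation, not a twist.
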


A strict contact manifold is called {\bf Besse}
if its Reeb flow is periodic.
An application of Reeb twists
as in Section \ref{subsec:reebtwists}
shows that the characteristic action $\AAA(f_{\pm})$
is not a contact invariant in general.
Even if the restriction of a contactomorphism
to the equator is a strict contactomorphism of the equator
the example in Section \ref{subsec:reebtwists} shows
that the assumption of being equal to $\varepsilon$
is absolutely essential.


\section{Radial symmetric functions\label{sec:radsymmfcts}}


\subsection{Morse lemma and consequences\label{subsec:mlandcons}}

Consider a smooth function $f\co\R\ra\R$.
By the fundamental theorem of calculus
we find a smooth function $f_1\co\R\ra\R$
such that $f(t)-f(0)=tf_1(t)$,
namely $f_1(t):=\int_0^1f'(st)\rmd s$.
Defining smooth functions $f_{k+1}:=(f_k)_1$ inductively,
we obtain $f(t)-f(0)=t^{k+1}f_{k+1}(t)$
whenever $f^{(l)}(0)=0$ for $l=1,\ldots,k$.
Taking $(k+1)$-fold derivative at $0$ yields
\[
f_{k+1}(0)=\frac{f^{(k+1)}(0)}{(k+1)!}
\,.
\]

\begin{lem}[Root lemma]
\label{lem:rootlem}
Let $f$ be an even function.
Then there exists a smooth function $\tilde{f}\co\R\ra\R$
such that $f(t)=\tilde{f}(t^2)$ for all $t\in\R$.
\end{lem}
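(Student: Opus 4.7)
The plan is to define $\tilde f$ on $[0,\infty)$ by $\tilde f(s):=f(\sqrt s)$, verify that it is smooth there, and then extend smoothly across the origin.

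The main technical device is the following consequence of the paragraph preceding the lemma. If $g\co\R\ra\R$ is smooth and even, then $g'$ is odd and vanishes at $0$, so the construction applied to $g'$ produces a smooth function $g^\sharp\co\R\ra\R$ with $g'(u)=u\,g^\sharp(u)$. The quotient expression $g^\sharp(u)=g'(u)/u$ makes it evident that $g^\sharp$ is itself even (for $u\ne 0$ by direct computation, and at $u=0$ by continuity). Iterating, I obtain a sequence $f^{\sharp^k}$ ($k\ge 0$, with $f^{\sharp^0}:=f$) of smooth even functions satisfying $(f^{\sharp^k})'(u)=u\,f^{\sharp^{k+1}}(u)$.

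I then claim, by induction on $k\ge 0$, that $\tilde f$ is of class $C^k$ on $[0,\infty)$ and
$$
\tilde f^{(k)}(s)=\tfrac{1}{2^k}\,f^{\sharp^k}(\sqrt s)\qquad(s\ge 0).
$$
For the induction step, on $s>0$ the chain rule gives
$$
\frac{\rmd}{\rmd s}\Bigl(\tfrac{1}{2^k}\,f^{\sharp^k}(\sqrt s)\Bigr)
=\tfrac{1}{2^k}\cdot\frac{(f^{\sharp^k})'(\sqrt s)}{2\sqrt s}
=\tfrac{1}{2^{k+1}}\,f^{\sharp^{k+1}}(\sqrt s),
$$
the factor $\sqrt s$ cancelling precisely by virtue of the identity above. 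The right-hand side extends continuously to $s=0$, and the mean value theorem then promotes $C^k$-regularity of $\tilde f$ at the origin to $C^{k+1}$-regularity.

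Finally, I extend $\tilde f$ from $[0,\infty)$ to a smooth function on $\R$, for instance by Seeley's extension theorem; the values on $(-\infty,0)$ do not affect the identity $f(t)=\tilde f(t^2)$ since $t^2\ge 0$. The main obstacle in this argument is the cancellation of the apparent $s^{-k/2}$-singularity appearing in the $k$-th derivative of $f(\sqrt s)$ at $s=0$; this cancellation is exactly what the evenness of every $f^{\sharp^k}$ guarantees, and without it the derivatives of $f(\sqrt s)$ would genuinely blow up as $s\to 0^+$.
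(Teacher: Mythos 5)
Your proposal is correct and follows essentially the same route as the paper: define $\tilde f(s)=f(\sqrt s)$ on $[0,\infty)$, use the Hadamard-type factorisation on the odd derivative $f'$ (the paper's $(f')_1$, your $f^\sharp$), observe that evenness is inherited so the step can be iterated, and conclude by Whitney/Seeley extension across the origin. The only cosmetic differences are that you set up an explicit iterated sequence $f^{\sharp^k}$ with the closed formula $\tilde f^{(k)}(s)=2^{-k}f^{\sharp^k}(\sqrt s)$ and invoke the mean-value theorem to push differentiability to the boundary, whereas the paper phrases the same induction as a recursion $\tilde f'=\tfrac12\widetilde{(f')_1}$ and checks the difference quotient at $0$ directly; both are equivalent.
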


\begin{proof}
By Whitney extension,
it suffices to find a smooth function $\tilde{f}$ on $[0,\infty)$,
as $t^2$ is always non-negative.
We can assume that $f(0)=0$.
Define
\[
\tilde{f}(t):=f(\sqrt{t})
\]
for $t\geq0$.
As $f$ is even, $\tilde{f}(t^2)=f(|t|)=f(t)$ for all $t\in\R$.

It remains to show smoothness of $\tilde{f}$.
For $t>0$ we have by the Morse lemma that
\[
\tilde{f}'(t)
=\frac{f'(\sqrt{t})}{2\sqrt{t}}
=\frac12(f')_1(\sqrt{t})
\]
as $f'(0)=0$ for an even function.
Similarly, using $f(0)=0$,
\[
\frac{\tilde{f}(t)}{t}
=\frac{(\sqrt{t})^2f_2(\sqrt{t})}{t}
=f_2(\sqrt{t})
\]
for all $t>0$,
so that the left handed limit for $t$ tends to $0$
exists and equals
\[
\tilde{f}'(0)=f_2(0)=\frac12f''(0)=\frac12(f')_1(0)
=\lim_{t\ra0}\tilde{f}'(t)
\,.
\]
The second equality follows with the Morse lemma,
the third from letting $t$ tend to $0$ in $f'(t)/t=(f')_1(t)$,
and the last from the equation $\tilde{f}'(t)=\frac12(f')_1(\sqrt{t})$
obtained above.
In other word, $\tilde{f}'$ extends (from the left) to $0$ continuously,
so that $\tilde{f}$ is $C^1$ on $[0,\infty)$ with
\[
\tilde{f}'=\frac12\widetilde{(f')_1}
\,.
\]
Repeating the present argument with the smooth, even function
$(f')_1-(f')_1(0)$ instead of $f$ yields
that $\widetilde{(f')_1}$ is $C^1$ and, consequently,
that $\tilde{f}$ is $C^2$.
Inductively, this shows that $\tilde{f}$ is smooth.
\end{proof}


\subsection{About the Hessian\label{subsec:hess}}

The root lemma implies
that a smooth function $f$ on $\R^n$
is radially symmetric if and only if
there exists a smooth function $\tilde{f}\co\R\ra\R$
such that $f(\bfx)=\tilde{f}(r^2)$, $r=|\bfx|$,
for all $\bfx$,
see Lemma \ref{lem:rootlem}.
Therefore,
\[
\partial_jf(\bfx)
=2x_j\tilde{f}'(r^2)
\]
and, hence,
\[
\partial_{ij}f(\bfx)
=4x_ix_j\tilde{f}''(r^2)
+2\delta_{ij}\tilde{f}'(r^2)
\,.
\]
In the proof of the root lemma we obtained
$\tilde{f}'(0)=\frac12f''(0)$,
so that
\[
\Hess_f(0)=f''(0)\mathbb{I}
\,.
\]


\section{Invariants of contactomorphisms\label{sec:invofcontactom}}


\subsection{Characteristic foliation\label{subsec:chrfol}}

Given a co-oriented contact manifold $(M,\xi)$,
the contact structure $\xi$
can be written as the kernel of a contact form $\alpha$.
The defining contact form $\alpha$ is determined uniquely
up to multiplication by a positive function on $M$.
The co-orientation of $\xi$ equals the one induced by the Reeb vector field
of any contact form that defines $\xi$.
Analogously,
$\xi$ defines an orientation on $M$ via $\alpha\wedge(\rmd\alpha)^n$
and a conformal symplectic structure on $\xi$ via $\rmd\alpha$,
where the dimension of $M$ is $2n+1$.

Consider an oriented hypersurface $S$ of $M$
provided with a positive volume form $\Omega$.
Setting $\beta:=\alpha|_{TS}$,
the equation $i_X\Omega=\beta\wedge(\rmd\beta)^{n-1}$
defines a smooth vector field $X$ on $S$ uniquely.
Changing $\Omega$ or $\alpha$ by a positive function, resp.,
results in a conformal change of $X$ by a positive function.
In other words,
the conformal class $[X]$ only depends on the orientation of $S$
and on the co-oriented contact structure $\xi$.
The oriented singular $1$-dimensional foliation $S_{\xi}$
integrated by any representing {\bf characteristic vector field} $X$
is called {\bf characteristic foliation},
regular $1$-dimensional leaves are the {\bf characteristics}.
Observe that the set of characteristic vector fields is convex
and, therefore, allows cut and paste operations.

For any characteristic vector field $X$ of $S_{\xi}$
and for all $p\in S$ 
the linear span $\R X_p$
is the symplectic orthogonal complement of
$T_pS\cap\xi_p=\ker\beta_p$
in $\xi_p$
w.r.t.\ the conformal symplectic structure on $\xi$,
see \cite[Lemma 2.5.20]{gei08}.
The {\bf singular points} of $S_{\xi}$ are those $p\in S$
for which $T_pS=\xi_p$ holds.
A singular point $p$ of $S_{\xi}$ is called {\bf positive}
whenever the orientations of $T_pS$ and $\xi_p$ coincide,
{\bf negative} otherwise.
After taking appropriate choices,
the set of singular points,
the so-called {\bf singular set},
is equal to the zero set
of any characteristic vector field of $S_{\xi}$.
Alternatively,
the singular set can be given by $\{p\in S\,|\,\beta_p=0\}$.
Hence,
by Frobenius non-integrability of $\xi$,
the singular set has no interior point.
Its complement, the so-called {\bf regular set}, is open.

The linearisation of a characteristic vector field $X$ of $S_{\xi}$
w.r.t.\ any connection $D$ on $S$
defines a linear endomorphism $DX|_p$ on $T_pS$
for all $p\in S$.
At a singular point $p\in S$ the endomorphism $DX|_p$
is independent of the choice of a connection
as it is given by the directional derivatives in a coordinate patch.
Furthermore, by product rule,
different choices of a characteristic vector field $X$
result in a change of $DX|_p$ by a positive constant.
In particular, the singularity $p$ of $S_{\xi}$
determines the eigenvalues of $DX|_p$
up to multiplication by a positive constant.

The previous discussion implies:

\begin{prop}
\label{prop:phiinvonchrfol}
Let $\varphi$ be a co-orientation preserving contactomorphism of $(M,\xi)$.
Consider an oriented hypersurface $T$ of $M$.
Assume that $\varphi$ restricts to an orientation preserving
diffeomorphism from $S$ onto $T$.
Then $\varphi$ preserves the oriented characteristic foliations,
restricts to a diffeomorphism between the regular sets
sending characteristics to characteristics and
induces a bijection between the signed singularities
preserving their eigenvalues conformally.
\end{prop}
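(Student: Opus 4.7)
The plan is to compute everything against a single globally chosen contact form and show that pull-back by $\varphi$ rescales the relevant forms, volumes and characteristic vector fields by positive functions only. Fix $\alpha$ with $\ker\alpha=\xi$, so the co-orientation hypothesis yields $\varphi^*\alpha=h\alpha$ for a positive function $h$ on $M$. Write $\psi:=\varphi|_S\co S\to T$ and let $X,X'$ be characteristic vector fields on $S,T$ associated to positive volume forms $\Omega,\Omega'$ via $i_X\Omega=\beta\wedge(\rmd\beta)^{n-1}$ and $i_{X'}\Omega'=\beta'\wedge(\rmd\beta')^{n-1}$, with $\beta=\alpha|_{TS}$ and $\beta'=\alpha|_{TT}$. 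Since $\psi$ is orientation preserving, $\psi^*\Omega'=k\Omega$ for some $k>0$.

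The key step is to pull back the defining equation for $X'$ along $\psi$. One has $\psi^*\beta'=h|_S\cdot\beta$, and in the expansion of $\big(\rmd(h|_S\,\beta)\big)^{n-1}$ every term containing the factor $\rmd(h|_S)\wedge\beta$ is annihilated when wedged with the outer $h|_S\,\beta$, so $\psi^*\big(\beta'\wedge(\rmd\beta')^{n-1}\big)=(h|_S)^n\,\beta\wedge(\rmd\beta)^{n-1}=(h|_S)^n\,i_X\Omega$. On the other hand $\psi^*(i_{X'}\Omega')=k\,i_{(\psi^{-1})_*X'}\Omega$. Matching both sides yields the pointwise relation $\psi_*X=\mu\,X'$ on $T$ with the positive function $\mu:=\big(k/(h|_S)^n\big)\circ\psi^{-1}$. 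This immediately implies all the foliation-level claims: $\psi$ sends oriented integral curves of $X$ to oriented integral curves of $X'$ up to reparametrisation, carries regular sets to regular sets (both equal the complements of the zero loci of the respective characteristic vector fields), and identifies the condition $T_pS=\xi_p$ with $T_{\varphi(p)}T=\xi_{\varphi(p)}$ via the standard characterisation of the singular set recalled in the body of the section.

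It remains to treat the sign of singular points and to establish the conformal identification of eigenvalues. From $\alpha|_\xi=0$ one obtains $\varphi^*\rmd\alpha|_\xi=\rmd(h\alpha)|_\xi=h\cdot\rmd\alpha|_\xi$, so $\varphi$ preserves the symplectic orientation of $\xi$; combined with the orientation preservation of $\psi$, this shows $(T_pS,\xi_p)$ and $(T_{\varphi(p)}T,\xi_{\varphi(p)})$ have the same relative orientation, and the sign of any singular point is invariant. For the eigenvalues I choose coordinates on $T$ near $\varphi(p)$ and transfer them to $S$ near $p$ via $\psi$, so that $\psi$ becomes the identity in these charts. In components the identity $\psi_*X=\mu\,X'$ then reads $X^i\circ\psi^{-1}=\mu\cdot {X'}^i$; differentiating at $\varphi(p)$ and using $X'(\varphi(p))=0$ kills the $\rmd\mu\otimes X'$ contribution and leaves $DX|_p=\mu(\varphi(p))\cdot DX'|_{\varphi(p)}$. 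Hence $DX|_p$ and $DX'|_{\varphi(p)}$ are conjugate up to multiplication by the positive scalar $\mu(\varphi(p))$, which is exactly the conformal match of eigenvalues claimed. The only delicate point in the whole argument is this last product-rule step, where the vanishing of $X'$ at the singular point is essential so that derivatives of the conformal factor drop out; everything else is careful bookkeeping of positive functions.
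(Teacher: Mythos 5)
Your argument is correct and follows the same route as the paper's proof: pull back the defining equation for the characteristic vector field along $\psi=\varphi|_S$, observe that the rescalings of $\alpha$ and of the volume form are by positive functions so that $\psi_*X$ is again a characteristic vector field, and handle the eigenvalue claim by noting the product-rule term vanishes at a zero of the vector field. You simply make explicit the factor $\mu=k/(h|_S)^n\circ\psi^{-1}$ and the coordinate computation that the paper phrases as conjugation by $T\psi$ with the pull-back connection.
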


\begin{proof}
The pull back action of $\varphi$ on $\alpha$
is given by multiplication with a positive function on $M$.
The pull back of a positive volume form on $T$ by $\psi:=\varphi|_S$
is a positive volume form on $S$.
Therefore,
given a characteristic vector field $X$ of $S_{\xi}$,
the push forward $\psi_*X$
will be a characteristic vector field of $T_{\xi}$ by the very definition.
This immediately implies the proposition
possibly
up to the statement about the eigenvalues,
which can obtained as follows:
The covariant derivative of $\psi_*X$
w.r.t.\ a connection $D'$ on $T$ is obtained from $DX$
by conjugation with $T\psi$,
where $D=\psi^*D'$ is the pull back connection on $S$.
\end{proof}


\subsection{Hamiltonian representation\label{subsec:hamrepr}}

Let $\alpha$ be a $\xi$-defining contact form on $M$
and denote its Reeb vector field by $R_{\alpha}$.
Consider a smooth function $H$ on $M$
and its {\bf contact Hamiltonian vector field} $X_H$,
which is the unique solution of
\[
\alpha(X_H)=H
\quad\text{and}\quad
i_{X_H}\rmd\alpha=
-\rmd H+\rmd H(R_{\alpha})\alpha
\,.
\]
Observe that $R_{\alpha}=X_1$.

\begin{prop}
\label{prop:hamvectischrvect}
Assume that $S$ is the regular zero set of $H$
and that $H$ growths in normal direction of $S$
according the co-orientation.
Assume further that $S_{\xi}$ admits singular points.
Then $X_H|_S$ generates the oriented singular $1$-dimensional foliation $S_{\xi}$.
\end{prop}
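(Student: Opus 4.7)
The plan is to verify that $X_H|_S$ takes values in $\ker\beta$, that its zero set coincides with the singular set of $S_{\xi}$, and that at regular points it lies in the correctly oriented characteristic direction. Since the class of characteristic vector fields is closed under multiplication by positive functions, it suffices to exhibit $X_H|_S$ as a positive multiple of a specific representative defined by a positive volume form $\Omega$ on $S$. First, $\alpha(X_H)|_S=H|_S=0$ places $X_H|_S$ in $\xi$, while evaluating $i_{X_H}\rmd\alpha=-\rmd H+\rmd H(R_{\alpha})\alpha$ at the argument $X_H$ yields $\rmd H(X_H)=\rmd H(R_{\alpha})\,H$, which vanishes on $S$; hence $X_H|_S\in TS\cap\xi=\ker\beta$. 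Testing the same equation against an arbitrary $Y\in\ker\beta$ gives $\rmd\alpha(X_H,Y)=0$, so $X_H|_p$ spans the one-dimensional $\rmd\alpha$-orthogonal of $\ker\beta_p$ inside $\xi_p$ at every regular point, which is exactly the characteristic direction singled out in Section \ref{subsec:chrfol}.

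For the zero locus, $X_H(p)=0$ forces $H(p)=0$ together with $\rmd H|_p=\rmd H(R_{\alpha})|_p\,\alpha|_p$ from the defining equations, i.e.\ $\rmd H|_p$ annihilates $\xi_p$, which gives $\xi_p\subseteq T_pS$ and hence $\xi_p=T_pS$ by dimension; this is precisely the singular-set condition, and the converse is immediate. To pin down the orientation I would pick a transverse vector field $\nu$ to $S$ satisfying $\rmd H(\nu)>0$; by the growth hypothesis this is the positive normal direction for the co-orientation of $S$, so $\Omega:=i_{\nu}\bigl(\alpha\wedge(\rmd\alpha)^n\bigr)|_{TS}$ is a positive volume form on $S$. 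Using the identity
\[
i_{X_H}\bigl(\alpha\wedge(\rmd\alpha)^n\bigr)=H\,(\rmd\alpha)^n+n\,\alpha\wedge\rmd H\wedge(\rmd\alpha)^{n-1},
\]
the tangency of $X_H|_S$, and the fact that $\rmd H|_{TS}=0$ kills every term containing $\rmd H$ upon restriction, a short contraction with $\nu$ yields $i_{X_H}\Omega=n\,\rmd H(\nu)\,\beta\wedge(\rmd\beta)^{n-1}$ on $S$. Therefore $X_H|_S$ equals a strictly positive multiple of the characteristic vector field attached to $\Omega$, and combined with the matching of zero sets this proves that $X_H|_S$ generates the oriented singular foliation $S_{\xi}$.

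The step I expect to carry the most friction is the orientation bookkeeping: making sure the transverse direction singled out by the growth of $H$ really is the positive co-orientation of $S$ inside $M$ equipped with its contact orientation, and keeping the signs straight when commuting $i_{\nu}$ with $i_{X_H}$ through $\alpha\wedge(\rmd\alpha)^n$ so that the coefficient $n\,\rmd H(\nu)$ emerges with the correct sign. The tangency and zero-set arguments, by contrast, are essentially pointwise linear algebra in $\xi_p$.
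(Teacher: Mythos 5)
Your proof is correct, but it takes a genuinely different route to the orientation step. You share with the paper the verification that $X_H|_S$ lies in $\ker\beta = TS\cap\xi$, vanishes exactly on the singular set, and spans the $\rmd\alpha$-orthogonal of $\ker\beta$ at regular points; the divergence is in how the \emph{sign} of the conformal factor is pinned down. The paper restricts $i_{X_H}(\rmd\alpha)^n$ to $TS$ to obtain $i_X(\rmd\beta)^n = n\,\rmd H(R_{\alpha})\,\beta\wedge(\rmd\beta)^{n-1}$ and then uses $(\rmd\beta)^n/(n\,\rmd H(R_{\alpha}))$ as a volume form --- but this is only a volume form \emph{near the singular points}, where $\rmd\beta$ is non-degenerate; the orientation is fixed there and then propagated by conformality, which is why the paper's argument invokes the standing hypothesis that $S_{\xi}$ has singular points. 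You instead build the globally defined positive volume form $\Omega = i_{\nu}\bigl(\alpha\wedge(\rmd\alpha)^n\bigr)\big|_{TS}$ from the ambient contact volume and a positively transverse vector field $\nu$ with $\rmd H(\nu)>0$, and read off $i_{X_H}\Omega = n\,\rmd H(\nu)\,\beta\wedge(\rmd\beta)^{n-1}$ directly (the extra sign from anticommuting $i_{\nu}$ and $i_{X_H}$ is indeed the one place to be careful, and you flag it yourself). This buys you a cleaner, fully global orientation argument that does not actually use the hypothesis on singular points, at the modest cost of introducing the auxiliary $\nu$ and the longer ambient contraction; the paper's route is shorter but more implicit about why the local sign check near singularities controls the orientation everywhere.
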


\begin{proof}
The choice of $H$ implies that
$\pm\rmd H(R_{\alpha})|_p>0$
at a positive/negative singularity $p$ of $S_{\xi}$
because $\pm T_pS$ equals $\xi_p$
as oriented vector spaces.
By the defining equations of the contact Hamiltonian $X_H$,
the restriction $X=X_H|_S$ is tangent to $\xi$ and $S$.
Moreover,
$X$ vanishes precisely at the singular points of $S_{\xi}$.
Hence, point-wise,
$\R X$ is the symplectic orthogonal complement of $TS\cap\xi$
by the very definition.

To conclude that $X$ is a characteristic vector field for $S_{\xi}$
observe that
\[
i_{X_H}(\rmd\alpha)^n=
n\big(\!-\!\rmd H+\rmd H(R_{\alpha})\alpha\big)
\wedge(\rmd\alpha)^{n-1}
\,.
\]
Restriction to $TS$ yields
\[
i_X(\rmd\beta)^n=
n\cdot\rmd H(R_{\alpha})\beta\wedge(\rmd\beta)^{n-1}
\]
setting $\beta=\alpha|_{TS}$.
As $\rmd\beta$ is symplectic near the positive/negative singularity $p$
inducing the right/wrong orientation of $S$ near $p$,
\[
\frac
{(\rmd\beta)^n}
{n\cdot\rmd H(R_{\alpha})}
\]
is a positive volume form of $S$ near $p$.
Therefore,
$X$ generates $S_{\xi}$ with the correct orientation.
\end{proof}

Consequently,
different choices of $\alpha$ and $H$
result in a multiplication of $X_H|_S$ by a positive function on $S$.


\subsection{Contactisation\label{subsec:contactisation}}

Consider the contactisation $(M,\alpha)=(\R\times V,\rmd b+\lambda)$
and assume that a hypersurface $S$ is the graph
of a smooth function $f$ on $V$.
In other words,
$S=\{\pm H=0\}$ is the regular zero set
of the Hamiltonian $H$ given by $H(b,v)=b-f(v)$ for all $v\in V$.
The co-orientation of $S$ is induced by $\pm\partial_b$.

Observe $R_{\alpha}=\partial_b$
and that $\rmd\alpha=\rmd\lambda$
restricts to a symplectic form on $S$.
In particular,
with the arguments made for Proposition \ref{prop:hamvectischrvect},
the contact Hamiltonian vector field $\pm X_{H}=X_{\pm H}$
restricts to a characteristic vector field on $S=\{\pm H=0\}$.
Denoting by $Y_{\lambda}$ the $\rmd\lambda$-dual of $\lambda$
and by $X_f$ the Hamiltonian vector field of $f$,
i.e.\ the unique solution of
\[
i_{Y_{\lambda}}\rmd\lambda=\lambda
\quad\text{and}\quad
i_{X_f}\rmd\lambda=-\rmd f
\,,
\]
resp.,
the contact Hamiltonian $X_H$ of $H=b-f$ is computed to be
\[
X_H=\big((b-f)+\lambda(X_f)\big)\partial_b+Y_{\lambda}-X_f
\,.
\]
Hence,
the induced characteristic vector field on $S$
equals
\[
X=\pm\big(\lambda(X_f)\partial_b+Y_{\lambda}-X_f\big)
\,.
\]


\subsection{Radial symmetry\label{subsec:radsymm}}

Let $V$ be an open neighbourhood of the origin in $\R^{2n}$
and take $\lambda=\tfrac12(\bfx\rmd\bfy-\bfy\rmd\bfx)$,
so that $Y_{\lambda}=\tfrac12(\bfx\partial_{\bfx}+\bfy\partial_{\bfy})$.
Assume for simplicity that $f(0)=0$
and that $0$ is a critical point of $f$.
Hence,
$T_0S=\{0\}\times\R^{2n}$ in $\R\times\R^{2n}$
and
$X_f=-J\nabla f$ is singular at $0$,
denoting the standard complex structure on $\R^{2n}$ by $J$.
 
Define a connection $D$ on $S$ 
by taking directional derivate on $\R\times\R^{2n}$
followed by orthogonal projection onto $TS$.
Then,
\[
DX|_0=\pm\big(\tfrac12\mathbb{I}+J\Hess_f(0)\big)
\,.
\]
If $f$ is radially symmetric we obtain
together with Section \ref{subsec:hess}
\[
DX|_0=\pm\big(\tfrac12\mathbb{I}+f''(0)J\big)
\,.
\]
Notice,
that the eigenvalues of $\tfrac12\mathbb{I}+f''(0)J$
equal $\frac12\pm f''(0)\rmi$.
As $Y_{\lambda}$ is perpendicular to the regular level sets of $f$,
which are round spheres about the origin,
$0$ is the unique singularity of $S_{\xi}$.

\begin{proof}[{\bf Proof of Theorem \ref{thmintr:contactbodies} -- only if part}]
Let $\varphi$ be an co-orientation preserving contactomorphism
from $(D_S,\xi_{\st})$ onto $(D_T,\xi_{\st})$.
Then, by Proposition \ref{prop:phiinvonchrfol},
$\varphi$ maps the signed singularity
$\pm(f_{\pm}(0),0)$ to $\pm(g_{\pm}(0),0)$ and
the eigenvalues $\pm\big(\frac12\pm f_{\pm}''(0)\rmi\big)$
and $\pm\big(\frac12\pm g_{\pm}''(0)\rmi\big)$
coincide up to a positive constant.
Comparing the real parts this constant is $1$ necessarily; 
comparing the imaginary parts afterwards yields
$|f_{\pm}''(0)|=|g_{\pm}''(0)|$.
Furthermore
the linearisation of $\varphi$ at the singularities
conjungates $\pm f_{\pm}''(0)J$ to $\pm g_{\pm}''(0)J$
(as the positive constant is observed to be equal to $1$)
and, therefore, maps
$\pm f_{\pm}''(0)\cdot\rmd\lambda(\,.\,,J\,.\,)$
to
$\pm g_{\pm}''(0)\cdot\rmd\lambda(\,.\,,J\,.\,)$
conformally.
Hence, 
$f_{\pm}''(0)=g_{\pm}''(0)$ as claimed.
\end{proof}


\subsection{Preserving the equator\label{subsec:equatorpreservation}}

Consider a shape $(S,V_S,f_{\pm})$
in $(\R\times V,\rmd b+\lambda)$
and assume that the closure of the shadow set $V_S$
is a Liouville domain.
In particular, $V_S$ has smooth boundary
and $Y_{\lambda}$ is transverse to $\partial V_S$ 
pointing outward.
Moreover,
assume that $f_{\pm}$ is constant along $\partial V_S$,
equal to zero, say.
Hence, the equator
$E=\{0\}\times\partial V_S=\pm\partial S^{\pm}$
is a smooth submanifold with contact form $\lambda|_{TE}$.

Choose a tubular neighbourhood $U_{\delta}$ of
$[-\delta,\delta]_b\times\partial V_S$
such that the restriction of $Y_{\lambda}$ to $U_{\delta}$
is transverse to $S$.
Let $\chi\equiv\chi(b)$ smooth function on $U_{\delta}$
equal to $\pm1$ for $b$ between $\pm\delta$ and $\pm\infty$
and strictly increasing on $(-\delta,\delta)$
with $\chi(b)=b$ near zero.
Extend $\chi$ to a smooth function on $\R\times V$
such that $\chi$ is $\pm1$ near $S^{\pm}\setminus U_{\delta}$.
The contact Hamiltonian vector field $X_{\chi}$ of $\chi$
is transverse to $\partial D_S$ pointing outwards
being equal to $\chi\partial_b+\chi'Y_{\lambda}$ along $S$.
Hence,
$S$ is a {\bf Giroux-convex} hypersurface.
As $X_{\chi}$ is annulated by $\rmd b+\lambda$
precisely along $E$,
the {\bf dividing set} defined by $X_{\chi}$ equals $E$.

Therefore,
given equivalent shapes
and a contactomorphism between their domains
we can assume after a contact isotopy
that moves the dividing sets onto each other
that the equator is preserved,
cf.\ \cite[Theorem 4.8.5(b)]{gei08}.
Such an equator preserving contactomorphism
restricts to a contactomorphism on $E$
equipped with the contact structure $\ker\lambda|_{TE}$.

\begin{rem}
\label{rem:divset}
 As discussed in \cite[Section 4.8]{gei08},
 on a closed Giroux-convex hypersurface $S$
 of $(M,\xi)$ there exists a smooth function $u\co S\ra\R$
 with the following properties:
 The zero locus $\Gamma$ of $u$ is non-empty
 and equal to a dividing set.
 Furthermore
 $X(u)<0$ along $\Gamma$
 for any characteristic vector field $X$ of $S_{\xi}$
 so that $\Gamma$ is the smooth oriented boundary
 of $\{u>0\}$ positively transvers to $S_{\xi}$.
 
  Observe that by \cite[Example 4.8.10]{gei08}
  there exists Giroux-convex hypersurfaces
  whose characteristic foliation
  does not admit any singularities.
 We will come back to this in
 Remark \ref{rem:nongirouxconvex}.
\end{rem}


\section{Integrating a contactomorphism\label{sec:intgratcont}}

We will prove the {\it if part} of Theorem \ref{thmintr:contactbodies}.
Throughout this section
we consider equivalent shapes
$(S,V_S,f_{\pm})$ and $(T,V_T,g_{\pm})$
in a contactisation $(\R\times V,\rmd b+\lambda)$.
The domains in $\R\times V$
bounded by the strict vertically convex hypersurfaces $S$ and $T$
are denoted by $D_S$ and $D_T$, resp.


\subsection{Vertically diffeomorphic\label{subsec:vertdiff}}

A diffeomorphism $\phi$ of $\R\times V$ is called {\bf vertical}
provided there exists a smooth function $f\co\R\times V\ra\R$
such that $\partial_bf>0$ and
$\phi(b,v)=\big(f(b,v),v\big)$ for all $(b,v)\in\R\times V$.

\begin{lem}
\label{lem:verticallydiffeomorphic}
  There exist open neighbourhoods $U^{\pm}\!$ of
  $S^{\pm}$ in $\R\times V_S$, resp.,
  and a vertical diffeomorphism $\phi=(f,\id)$ of
  $\;\R\times V$
  such that
  	\begin{enumerate}
 	\item[(i)]
  		$\phi(D_S)=D_T$,
 	\item[(ii)]
		$f(b,v)=b+g_{\pm}(v)-f_{\pm}(v)$
		for all $(b,v)\in U^{\pm}$, and
 	\item[(iii)]
		$\phi$ is the time-$1$ map of a vertical isotopy
		with compact support in $\R\times V_S$,
		outside of which $f$ will be equal to $(b,v)\mapsto b$.
  	\end{enumerate}
\end{lem}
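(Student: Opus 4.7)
The approach is to produce $\phi$ as the time-$1$ map of the flow of a single time-independent vertical vector field $X=F\,\partial_b$, engineered to realise the required vertical translation near $S^{\pm}$.

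Set $H_{\pm}(v):=g_{\pm}(v)-f_{\pm}(v)$ on $V_S$. Since the shapes are equivalent, the germs of $f_{\pm}$ and $g_{\pm}$ agree along $\partial V_S$, so $H_{\pm}$ extends smoothly by zero to all of $V$ with compact support $K_{\pm}\subset V_S$. On the compact set $K_{+}\cup K_{-}$, the strict inequalities $f_{-}<f_{+}$ and $g_{-}<g_{+}$ provide a positive vertical gap that separates the compact shift slabs
\[
\Omega_{\pm}:=\bigl\{(b,v) : v\in K_{\pm},\ \min\{f_{\pm}(v),g_{\pm}(v)\}\leq b\leq\max\{f_{\pm}(v),g_{\pm}(v)\}\bigr\}.
\]
Choose relatively compact open neighbourhoods $W^{\pm}\supset\Omega_{\pm}$ in $\R\times V_S$ that remain disjoint, and smooth cut-offs $\chi_{\pm}\co\R\times V\to[0,1]$ supported in $\R\times V_S$ with $\chi_{\pm}\equiv 1$ on $W^{\pm}$ and $\supp\chi_{+}\cap\supp\chi_{-}=\emptyset$.

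Define $F:=\chi_{+}H_{+}+\chi_{-}H_{-}$ and let $\phi_t(b,v)=(h_t(b,v),v)$ be the flow of $X:=F\,\partial_b$. Since $X$ is smooth and compactly supported in $\R\times V_S$, the flow is too, giving (iii) together with the identity outside this support. Take $U^{\pm}$ to be a sufficiently thin tubular neighbourhood of $S^{\pm}$ in $\R\times V$. For $(b,v)\in U^{\pm}$ with $v\in K_{\pm}$, the straight segment $\{b+sH_{\pm}(v) : s\in[0,1]\}$ stays inside $W^{\pm}$ (by thinness of $U^{\pm}$ and because the segment starting from $f_{\pm}(v)$ already lies in $\Omega_{\pm}$), hence $F=H_{\pm}(v)$ along the integral curve of $X$ and $h_1(b,v)=b+H_{\pm}(v)$. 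For $(b,v)\in U^{\pm}$ with $v\notin K_{\pm}$, one has $H_{\pm}(v)=0$ and, again by thinness of $U^{\pm}$, $F$ vanishes on a neighbourhood of $(b,v)$, so once more $h_1(b,v)=b=b+H_{\pm}(v)$. This yields (ii). Evaluating at $b=f_{\pm}(v)$ gives $\phi_1(f_{\pm}(v),v)=(g_{\pm}(v),v)$ for every $v\in V_S$, and the $b$-monotonicity of $h_1$ inherent in any flow of a smooth vector field upgrades this fibre-wise boundary identification to $\phi_1(D_S)=D_T$, proving (i).

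The chief technical obstacle is the simultaneous construction of the two cut-offs $\chi_{\pm}$ with disjoint supports together with the matching $U^{\pm}$-thinness adjustment. It reduces to finding disjoint relatively compact open neighbourhoods of the compact slabs $\Omega_{+}$ and $\Omega_{-}$ inside $\R\times V_S$ and choosing $U^{\pm}$ narrow enough that translation along $H_{\pm}$ stays trapped inside $W^{\pm}$; both are afforded by the positive vertical gap on $K_{+}\cup K_{-}$ together with the compactness of the $K_{\pm}$.
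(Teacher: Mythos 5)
Your construction is a different route from the paper's: you attempt a single time-independent vertical vector field $X=F\,\partial_b$ with $F=\chi_+H_++\chi_-H_-$, whereas the paper composes two vertical flows, first carrying $S^-$ onto $T^-$ and only then carrying the resulting image of $S^+$ onto $T^+$. Unfortunately, your one-step version rests on a claim that is false in general.

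The gap is in the assertion that the strict inequalities $f_-<f_+$ and $g_-<g_+$ ``provide a positive vertical gap that separates'' the shift slabs $\Omega_+$ and $\Omega_-$. Disjointness of $\Omega_+$ and $\Omega_-$ would require
\[
\max\{f_-(v),g_-(v)\}<\min\{f_+(v),g_+(v)\}\quad\text{for all }v\in K_+\cup K_-,
\]
and this does \emph{not} follow from the hypotheses. Equivalence of shapes constrains only the germs of $f_\pm$ and $g_\pm$ near $\partial V_S$; in the interior of $V_S$ the pairs $(f_-,f_+)$ and $(g_-,g_+)$ are otherwise independent. In particular nothing prevents $g_-(v)>f_+(v)$ at some interior $v$: take $f_\pm$ fixed, keep $g_\pm=f_\pm$ near $\partial V_S$, and in a compact interior region raise $g_-$ above $f_+$ while raising $g_+$ even higher so that $g_-<g_+$ is preserved. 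For such $v$ the slabs $\Omega_-$ and $\Omega_+$ overlap, so there are no disjoint open neighbourhoods $W^\pm$ and consequently no cut-offs $\chi_\pm$ with disjoint supports that are $\equiv 1$ on the respective slabs. The straight-segment argument then also fails, since $F$ would no longer equal $H_\pm(v)$ along the relevant trajectory. This is precisely the difficulty that the paper's two-step composition is designed to avoid: after the first flow $\phi_-$ has moved $S^-$ onto $T^-$, the graph of $\tilde f_+=f_++(g_--f_-)$ and the graph of $g_+$ both lie strictly above the graph of $g_-$, so the second flow can be chosen to fix a neighbourhood of $T^-$. To fix your argument you would need to abandon the time-independent field and run the two shifts sequentially, which recovers the paper's construction.
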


\begin{proof}
  Let $\varepsilon>0$ be small and consider the functions
  \[
  \max\{f_-,g_-\}+\varepsilon
  \quad\text{and}\quad
  \min\{f_-,g_-\}-\varepsilon
  \]
  on $V_S$.
  Let $A_1^-$ be the set of points in $\R\times V_S$
  that lie between its graphs.
  Let $A_0^-$ be the set of points in $\R\times V_S$
  that lie above or below the graph of
  \[
  \max\{f_-,g_-\}+2\varepsilon
  \quad\text{and}\quad
  \min\{f_-,g_-\}-2\varepsilon
  \]
  resp.
  Choose a smooth function $\chi_-\co\R\times V_S\ra [0,1]$
  that is $1$ on $A_1^-$ and $0$ on $A_0^-$.
  Define a vector field
  \[
  \chi_-(b,v)
  \Big(g_-(v)-f_-(v)\Big)
  \cdot\partial_b
  \]
  on $\R\times V$ thinking of $g_--f_-$ being smoothly extended
  to $V$ by $0$.
  The time-$1$ map of the flow
  yields a vertical diffeomorphism $\phi_-$
  with compact support in $\R\times V_S$,
  which maps $S^-$ onto $T^-$ and restricts to
  \[
  (b,v)\longmapsto\Big(b+g_-(v)-f_-(v),v\Big)
  \]
  for all $(b,v)\in A_1^-$.
  
  The image $\phi_-\big(\{b=f_+\}\big)$ of $S^+$ is the graph of
  $\tilde{f}_+=f_++(g_--f_-)$.
  Repeating the construction with $f_-,g_-$ replaced by $\tilde{f}_+,g_+$
  yields a vertical diffeomorphism $\phi_+$
  with compact support in $\R\times V_S$,
  which maps $\phi_-(S^+)$ onto $T^+$ and restricts to
  \[
  (b,v)\longmapsto\Big(b+g_+(v)-\tilde{f}_+(v),v\Big)
  \]
  for all $(b,v)\in A_1^+$.
  Choosing $\varepsilon>0$ small enough
  ensures $\phi_+=\id$ in a neighbourhood of $\phi_-(S^-)=T^-$
  as $g_+-\tilde{f}_+$ vanishes near $\partial V_S$.
  Then $\phi=\phi_+\circ\phi_-$
  is the desired vertical diffeomorphism.
\end{proof}

\begin{rem}[Odd-symplectic uniqueness]
\label{rem:oddsympuniness}
Pulling back $\rmd b+\lambda$
along a vertical diffeomorphism $\phi=(f,\id)$
yields the contact form $\rmd f+\lambda$,
which coincides with $\rmd b+\lambda$ in the complement
of a compact subset of $\R\times V_S$.
Taking the exterior derivative,
Lemma \ref{lem:verticallydiffeomorphic} implies that
$(D_S,\rmd\lambda)$ and $(D_T,\rmd\lambda)$
are vertically odd-symplectomorphic.
\end{rem}


\subsection{Interpolation}
\label{subsec:interpolation}

Lemma \ref{lem:verticallydiffeomorphic} yields a
vertical isotopy $\phi_t$ of $\R\times V$
of the form $\phi_t(b,v)=\big(f_t(b,v),v\big)$
for all $(t;b,v)\in[0,1]\times\R\times V$
with compact support in $\R\times V_S$.
Here $f_t\co\R\times V\ra\R$ is a smooth family of functions
such that $\partial_bf_t>0$.
Define a family of positive contact forms
$\rmd f_t+\lambda$ by pulling back
$\rmd b+\lambda$ along $\phi_t$.

As the space of vertical diffeomorphisms is convex,
$\phi_t$ is isotopic to the vertical isotopy
for which $f_t$ is replaced by the convex combination
of $f$ and $(b,v)\mapsto b$.
Therefore, we consider the convex homotopy
\[
\alpha_t:=(1-t)\rmd b+t\,\rmd f+\lambda
\]
of compactly supported contact forms.


\subsection{Moser problem}
\label{subsec:moserproblem}

Consider an isotopy $\varphi_t$ of $\R\times V$.
Taking time-derivative $\dot{\varphi_t}=X_t|_{\varphi_t}$
we obtain a time-dependent vector field $X_t$ on $\R\times V$.
Assuming $\varphi_t$ has compact support,
$X_t$ is defined uniquely.
Picard--Lindel\"of yields the converse.
Therefore, the isotopy $\varphi_t$,
whose support we assume to be compact
and contained in $\R\times V_S$,
can be identified with $X_t$ uniquely.
Observe, that $\supp(\varphi_t)=\supp(X_t)$.

In addition,
let $f_t$ be a smooth family of positive functions on $\R\times V_S$
such that the support of $f_t-1$ is contained in $\supp(\varphi_t)$.
As explained in \cite{gei08},
the pair $(\varphi_t,f_t)$ solves $\varphi_t^*\alpha_t=f_t\alpha_0$
for all $t\in[0,1]$
(so that $\varphi_0=\id$ implies $f_0=1$)
if and only if
the pair $(X_t,g_t)$ consisting of the $\varphi_t$-generating
vector field $X_t$ and $g_t:=(\ln f_t)\,\dot{}\circ\varphi_t^{-1}$
solves
\[
\dot\alpha_t+\rmd H_t+i_{X_t}\rmd\alpha_t=g_t\alpha_t
\]
for all $t\in[0,1]$,
where $H_t:=\alpha_t(X_t)$.
Observe, that $f_t=\rme^{\int_0^tg_s\circ\varphi_s\rmd s}$
and the support of $g_t$ is contained in $\supp(\varphi_t)$.

\begin{rem}
\label{rem:frommosertocontactom}
In order to prove
the {\it if part} of Theorem \ref{thmintr:contactbodies}
it suffices to find $(X_t,g_t)$
with compact support in $\R\times V_S$
such that $\varphi_t(S)\subset S$
for all $t\in[0,1]$.
Indeed, in view of Lemma \ref{lem:verticallydiffeomorphic},
$\phi\circ\varphi_1$ will be a contactomorphism
from $(D_S,\xi_{\lambda})$ onto $(D_T,\xi_{\lambda})$.
\end{rem}

Therefore,
it will be more than enough to find $(X_t,g_t)$ such that
$X_t$ is tangent to $S$ for all $t\in[0,1]$.
This means
\[
\rmd f_{\pm}(X_t)|_{S^{\pm}}=\rmd b(X_t)|_{S^{\pm}}
\]
for all $t\in[0,1]$
as equivalence of shapes allows $X_t=0$
near the equatorial set.
Observe that $\varphi_t(S^{\pm})=S^{\pm}$
for all $t\in[0,1]$ afterall.


\subsection{Decomposition}
\label{subsec:decomposition}

Abbreviate
\[
\eta_t:=t\,\rmd_V\!f
\]
denoting the horizontal part of $\rmd$ by $\rmd_V\!$.
Decompose
\[
\alpha_t=\frac{1}{r_t}\rmd b+
\eta_t+\lambda
\,,
\]
where the coefficient of the vertical part
\[
\frac{1}{r_t}:=(1-t)+t\partial_bf
\]
is a positive function.
Finally,
writing $s:=\partial_bf-1$ and $\delta:=\rmd_V\!f$,
we have
\[
\dot\alpha_t=
s\rmd b+\delta
\,.
\]
Observe that the given data
$\eta_t$, $r_t-1$, $s$ and $\delta$
have compact support in $\R\times V_S$.
Restricted to the neighbourhoods $U^{\pm}\!$ of $S^{\pm}$
we have $\rmd_V\!f=\rmd(g_{\pm}-f_{\pm})$,
$r_t=1$ and $s=0$,
see (ii) in Lemma \ref{lem:verticallydiffeomorphic}.

Similarly,
we decompose the integrating vector field
$X_t=H_tR_{\alpha_t}+Y_t$.
The Reeb vector field of $\alpha_t$
is equal to $R_{\alpha_t}=r_t\partial_b$
as $\rmd\alpha_t=\rmd\lambda$.
The component tangent to the kernel of $\alpha_t$
can be written as $Y_t=y_t\partial_b+V_t$
with $V_t$ being horizontal.
Hence,
\[
X_t=(H_tr_t+y_t)\partial_b+V_t
\,.
\]

\begin{lem}
\label{lem:decompmosprob}
  The Moser problem
  formulated in Section \ref{subsec:moserproblem}
  admits a solution $(\varphi_t,f_t)$
  if and only if
  there exist $H_t$, $y_t$, $V_t$ and $g_t$ such that
  	\begin{enumerate}
 	\item[(i)]
	  $y_t=-r_t(\eta_t+\lambda)(V_t)$,
 	\item[(ii)]
	  $i_{V_t}\rmd\lambda=-\delta-\rmd_V\!H_t+g_t(\eta_t+\lambda)$,
 	\item[(iii)]
	  $0=s+\partial_bH_t-\frac{g_t}{r_t}$,
  	\end{enumerate}
 and the boundary condition
  	\begin{enumerate}
 	\item[(iv)]
	  $
	  H_t|_{S^{\pm}}=
	  \big((1-t)\rmd f_{\pm}+t\rmd g_{\pm}+\lambda\big)(V_t)|_{S^{\pm}}
	  $
  	\end{enumerate}
  are satisfied.
  Furthermore $(\varphi_t,f_t)$ will have
  compact support in $\R\times V_S$
  precisely if the $H_t$, $y_t$, $V_t$ and $g_t$ do.
\end{lem}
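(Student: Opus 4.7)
The plan is to substitute the decompositions from Section \ref{subsec:decomposition} into the Moser equation
\[
\dot\alpha_t+\rmd H_t+i_{X_t}\rmd\alpha_t=g_t\alpha_t
\]
and split the resulting $1$-form equation into its $\rmd b$-component and its horizontal component. Because $\alpha_t=(1-t)\rmd b+t\rmd f+\lambda$, the exterior derivative reduces to $\rmd\alpha_t=\rmd\lambda$, which is purely horizontal. Since $\partial_b$ lies in the kernel of $\rmd\lambda$, we get $i_{X_t}\rmd\alpha_t=i_{V_t}\rmd\lambda$, again horizontal. The $\rmd b$-component of the Moser equation thus reads $s+\partial_bH_t=g_t/r_t$, which is exactly (iii), while the horizontal part reads $\delta+\rmd_V\!H_t+i_{V_t}\rmd\lambda=g_t(\eta_t+\lambda)$, which rearranges to (ii).

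Next I would dispense with the identity (i) by unpacking the defining relation $H_t=\alpha_t(X_t)$. Using
\[
\alpha_t(X_t)=\frac{1}{r_t}(H_tr_t+y_t)+(\eta_t+\lambda)(V_t)
=H_t+\frac{y_t}{r_t}+(\eta_t+\lambda)(V_t),
\]
the vanishing of the last two terms gives $y_t=-r_t(\eta_t+\lambda)(V_t)$, which is (i). At this point the three interior identities (i)--(iii) are equivalent to the Moser equation together with $H_t=\alpha_t(X_t)$, so a solution of the Moser problem is the same as a quadruple $(H_t,y_t,V_t,g_t)$ satisfying (i)--(iii), with $X_t$ reconstructed from $H_t,y_t,V_t$.

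The substantive point is the boundary condition. By Remark \ref{rem:frommosertocontactom} tangency of $X_t$ to $S^{\pm}$ amounts to $\rmd f_{\pm}(V_t)=\rmd b(X_t)=H_tr_t+y_t$ on $S^{\pm}$. Substituting (i) and dividing by $r_t$ yields
\[
H_t=\frac{\rmd f_{\pm}(V_t)}{r_t}+(\eta_t+\lambda)(V_t)\qquad\text{on }S^{\pm}.
\]
Now I invoke property (ii) of Lemma \ref{lem:verticallydiffeomorphic}: on the neighbourhoods $U^{\pm}$ of $S^{\pm}$ the function $f$ has the form $b+g_{\pm}-f_{\pm}$, hence $r_t=1$ and $\eta_t=t\,\rmd(g_{\pm}-f_{\pm})$ on $U^{\pm}$. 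Plugging this in and collecting terms linear in $t$ gives
\[
H_t=(1-t)\rmd f_{\pm}(V_t)+t\,\rmd g_{\pm}(V_t)+\lambda(V_t)\qquad\text{on }S^{\pm},
\]
which is precisely (iv).

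Finally the support statement is routine: by Section \ref{subsec:moserproblem} compact support of $(\varphi_t,f_t)$ in $\R\times V_S$ is equivalent to compact support of $(X_t,g_t)$, and the identities (i)--(iii) show that the support of the pair $(X_t,g_t)$ is carried by the supports of $H_t,y_t,V_t,g_t$, so the two support properties match. I expect the main obstacle to be purely bookkeeping: keeping track of which terms are horizontal versus vertical, and handling the identification $\eta_t=t\,\rmd(g_{\pm}-f_{\pm})$ on $U^{\pm}$ carefully enough to extract (iv) in the stated form.
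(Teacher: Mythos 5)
Your proof is correct and follows essentially the same approach as the paper: the $\rmd b$- and horizontal components of the Moser equation give (iii) and (ii), the kernel condition $\alpha_t(Y_t)=0$ (equivalently $H_t=\alpha_t(X_t)$ under the Reeb decomposition) gives (i), and plugging (i) into the tangency condition $\rmd f_{\pm}(V_t)=\rmd b(X_t)$ together with $r_t=1$, $\eta_t=t\,\rmd(g_{\pm}-f_{\pm})$ on $U^{\pm}$ gives (iv). The support statement is handled in the same way as in the paper.
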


\begin{proof}
  The kernel condition $\alpha_t(Y_t)=0$ is equivalent to (i).
  The Lie condition from Section \ref{subsec:moserproblem}
  decomposes into (ii) and (iii) equivalently.
  The boundary condition from Section \ref{subsec:moserproblem}
  equals (iv), in which (i) is plugged in
  using $\rmd_V\!f=\rmd(g_{\pm}-f_{\pm})$ and $r_t=1$
  on the neighbourhoods $U^{\pm}$ of $S^{\pm}$
  in $\R\times V_S$,
  see Lemma \ref{lem:verticallydiffeomorphic}.
  The statement about the supports is immediate from (i), (ii) and (iii).
\end{proof}


\subsection{Achieving the boundary condition}
\label{subsec:achthebc}

We will continue the use 
of horizontal Liouville and Hamiltonian vector fields
as introduced in Section \ref{subsec:contactisation}.

\begin{lem}
\label{lem:intvecfield}
  Let $H_t$ be a smooth family of functions
  on $\R\times V$ with compact support
  contained in $U^+\cup\,U^-$,
  see Lemma \ref{lem:verticallydiffeomorphic}.
  Assume that the family of horizontal vector fields
  \[
    V_t=X_f+X_{H_t}+r_t(s+\partial_bH_t)(Y_{\lambda}-tX_f)
  \]
  satisfy the boundary condition (iv) in Lemma \ref{lem:decompmosprob}.
  Then the Moser problem formulated in Section \ref{subsec:moserproblem}
  admits a compactly supported solution $(\varphi_t,f_t)$.
\end{lem}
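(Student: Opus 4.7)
The plan is to verify that the given $V_t$, together with $H_t$ from the hypothesis and suitably defined $g_t$ and $y_t$, realises conditions (i)--(iv) of Lemma \ref{lem:decompmosprob}, after which that lemma directly yields a compactly supported solution of the Moser problem. Since (iv) is hypothesised on the $V_t$ at hand, and the formula for $V_t$ already decides everything, essentially all one has to do is unravel the remaining three equations.

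First I would define
\[
g_t:=r_t(s+\partial_b H_t)
\]
so that (iii) becomes a tautology, and set
\[
y_t:=-r_t(\eta_t+\lambda)(V_t)
\]
so that (i) holds by fiat. The substantive step is the verification of (ii): contracting $\rmd\lambda$ with the three summands of $V_t$ and using the horizontal identities $i_{X_f}\rmd\lambda=-\rmd_V\!f=-\delta$, $i_{X_{H_t}}\rmd\lambda=-\rmd_V\!H_t$, and $i_{Y_\lambda}\rmd\lambda=\lambda$ from Section \ref{subsec:contactisation}, one computes
\[
i_{V_t}\rmd\lambda
=-\delta-\rmd_V\!H_t+r_t(s+\partial_b H_t)\bigl(\lambda+t\delta\bigr)
=-\delta-\rmd_V\!H_t+g_t(\eta_t+\lambda),
\]
where the second equality uses $\eta_t=t\,\rmd_V\!f=t\delta$ and the definition of $g_t$. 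This is exactly (ii); this is the only computation in the argument and is essentially the reason the particular ansatz for $V_t$ was chosen.

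Finally I would check the support claims. By hypothesis $H_t$ has compact support in $U^+\cup U^-\subset\R\times V_S$, hence so do $\rmd_V\!H_t$, $\partial_bH_t$ and $X_{H_t}$. By Lemma \ref{lem:verticallydiffeomorphic}(iii), the function $f$ equals $(b,v)\mapsto b$ outside a compact subset of $\R\times V_S$; consequently $\delta=\rmd_V\!f$, $s=\partial_bf-1$ and $r_t-1$ all vanish there, which forces $X_f=0$ and, inspecting the formula, $V_t=0$ there. Therefore $V_t$, $g_t$ and $y_t$ all have compact support in $\R\times V_S$. Since (iv) holds by assumption on $V_t$, Lemma \ref{lem:decompmosprob} applies and produces the desired compactly supported $(\varphi_t,f_t)$. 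The only real obstacle is the algebraic matching in (ii); everything else is bookkeeping, and the lemma really just records the fact that the ansatz was reverse-engineered so that this matching holds automatically, leaving only the boundary condition (iv) as a genuine constraint to be dealt with elsewhere.
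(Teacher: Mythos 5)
Your proof is correct and follows the same route as the paper: define $g_t$ via (iii) and $y_t$ via (i), verify (ii) by contracting $\rmd\lambda$ with the three summands of $V_t$ and matching terms via $\eta_t=t\delta$, note the support is compact, and invoke Lemma~\ref{lem:decompmosprob}. You actually spell out the support bookkeeping (via Lemma~\ref{lem:verticallydiffeomorphic}(iii)) more explicitly than the paper does, but the argument is otherwise identical.
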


\begin{proof}
  Define $y_t$ by (i) in Lemma \ref{lem:decompmosprob}
  using the given $V_t$.
  For given $H_t$,
  formula (iii) determines $g_t=r_t(s+\partial_bH_t)$.
  Plugged in into the right hand side of (ii) yields
  the one of
  \[
    i_{V_t}\rmd\lambda=
    -\delta-\rmd_V\!H_t+r_t(s+\partial_bH_t)(\eta_t+\lambda)
    \,,
  \]
  so that the given $V_t$ is the $\rmd\lambda$-dual horizontally,
  i.e.\ (ii) in Lemma \ref{lem:decompmosprob} holds.
  As $V_t$ admits compact support in $\R\times V_S$
  so $y_t$ and $g_t$ do.
  The claim follows with Lemma \ref{lem:decompmosprob}.
\end{proof}

Let $H_t$ be a solution as in Lemma \ref{lem:intvecfield}.
We will utilise that, by (ii) in Lemma \ref{lem:verticallydiffeomorphic},
$\rmd_V\!f=\rmd(g_{\pm}-f_{\pm})$, $r_t=1$ and $s=0$ holds
on the neighbourhoods $U^{\pm}\!$ of $S^{\pm}$.
Hence
  \[
    V_t|_{U^{\pm}}=
    X_f+
    X_{H_t}+
    \partial_bH_t\cdot(Y_{\lambda}-tX_f)
    \,.
  \]
The boundary condition (iv) in Lemma \ref{lem:decompmosprob}
can be phrased like
  \[
    H_t|_{S^{\pm}}=
    \big(t\rmd_V\!f+\lambda\big)(V_t)|_{S^{\pm}}+
    \rmd f_{\pm}(V_t)|_{S^{\pm}}
    \,.
  \]
In order to compute the right hand side observe that
  \[
    t\rmd_V\!f
    \big(X_f+X_{H_t}\big)=
    -\rmd_V\!H_t\big(tX_f\big)
  \]
by horizontal energy preservation and that
  \[
    \lambda
    \big(X_f+X_{H_t}\big)=
    \lambda(X_f)+
    \rmd_V\!H_t(Y_{\lambda})
    \,.
  \]
Adding both using $\rmd\lambda$-duality as
  \[
    \big(t\rmd_V\!f+\lambda\big)
    \big(\partial_bH_t\cdot(Y_{\lambda}-tX_f)\big)
    =0
  \]
yields
  \[
    \big(t\rmd_V\!f+\lambda\big)(V_t)=
    \lambda(X_f)+\rmd_V\!H_t\big(Y_{\lambda}-tX_f\big)
    \,.
  \]
Furthermore
  \[
    \rmd f_{\pm}(V_t)=
    \rmd f_{\pm}(X_f)-
    \rmd_V\!H_t\big(X_{f_{\pm}}\big)+
    \partial_bH_t\cdot\rmd f_{\pm}\big(Y_{\lambda}-tX_f\big)
    \,,
  \]
so that the desired right hand side
  \[
    \big(t\rmd_V\!f+\lambda\big)(V_t)+
    \rmd f_{\pm}(V_t)
  \]
is equal to
  \[
    \big(\rmd f_{\pm}+\lambda\big)(X_f)+
    \big(\partial_bH_t\cdot\rmd f_{\pm}+\rmd_V\!H_t\big)
    \big(Y_{\lambda}-tX_f-X_{f_{\pm}}\big)
  \]
as $\rmd f_{\pm}$ annihilates $X_{f_{\pm}}$.
Similarly, as
$\big(\rmd f_{\pm}+\lambda\big)(X_f)=
\rmd_V\!f\big(Y_{\lambda}-X_{f_{\pm}}\big)$,
we get
  \[
    \big(\rmd_V\!f+\partial_bH_t\cdot\rmd f_{\pm}+\rmd_V\!H_t\big)
    \big(Y_{\lambda}-tX_f-X_{f_{\pm}}\big)
    \,.
  \]
In order to evaluate along $S^+=\{b=f_+\}$ we set
  \[
    h_t^{\pm}(v):=H_t\big(f_{\pm}(v),v\big)
  \]
for all $v\in V_S$.
By chain rule
  \[
    \rmd h_t^{\pm}=\partial_bH_t\cdot\rmd f_{\pm}+\rmd_V\!H_t
    \,,
  \]
so that the boundary condition becomes
  \[
    h_t^{\pm}=
    \big(\rmd_V\!f+\rmd h_t^{\pm}\big)
    \big(Y_{\lambda}-tX_f-X_{f_{\pm}}\big)
  \]
on $V_S$ finally.

The converse is also true.

\begin{prop}
\label{prop:charisteqn}
  Let $h_t^{\pm}$ be a smooth family of functions
  on $V_S$ with compact support that solves
  \[
    h_t^{\pm}=
    \rmd\big(h_t^{\pm}+(g_{\pm}-f_{\pm})\big)
    \big(Y_{\lambda}-X_{f_{\pm}+t(g_{\pm}-f_{\pm})}\big)
  \]
  for each of the signs $\pm$ individually.
  Then $(D_S,\xi_{\lambda})$ and $(D_T,\xi_{\lambda})$
  are contactisotopic relative a neighbourhood
  of the common equatorial set.
\end{prop}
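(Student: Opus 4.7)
The plan is to reverse-engineer the derivation of Section \ref{subsec:achthebc}: given $h_t^\pm$ solving the stated characteristic equation, extend it to a global smooth Hamiltonian $H_t$ with $H_t(f_\pm(v), v) = h_t^\pm(v)$, feed it into Lemma \ref{lem:intvecfield} to produce a compactly supported Moser solution, and then invoke Remark \ref{rem:frommosertocontactom} together with Lemma \ref{lem:verticallydiffeomorphic}.

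First I would shrink $U^+$ and $U^-$ so they are disjoint tubular neighbourhoods of $S^\pm$, each parametrised by $(b, v) = \big(f_\pm(v) + s, v\big)$ for $s$ in a small interval about $0$. Fix a compactly supported bump $\chi\co\R\ra\R$ with $\chi(0)=1$ whose support lies in that interval, and set
\[
H_t(b, v) := \chi\big(b - f_\pm(v)\big)\,h_t^\pm(v)
\quad\text{on } U^\pm,
\]
extended by zero elsewhere. Since $h_t^\pm$ has compact support in the open set $V_S$, this defines a smooth family of compactly supported functions on $\R \times V$ whose support sits in $U^+ \cup U^-$ away from the equator, with the prescribed trace on $S^\pm$. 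The vector field $V_t = X_f + X_{H_t} + r_t(s + \partial_b H_t)(Y_{\lambda} - tX_f)$ from Lemma \ref{lem:intvecfield} then has compact support in $\R \times V_S$.

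Next I would verify the boundary condition (iv) of Lemma \ref{lem:decompmosprob}: the computation of Section \ref{subsec:achthebc} rewrote this condition along $S^\pm$ as $h_t^\pm = \big(\rmd_V\!f + \rmd h_t^\pm\big)\big(Y_{\lambda} - tX_f - X_{f_\pm}\big)$, which on $U^\pm$ coincides with the hypothesis of Proposition \ref{prop:charisteqn} after substituting $\rmd_V\!f = \rmd(g_\pm - f_\pm)$ and $X_{f_\pm} + tX_f = X_{f_\pm + t(g_\pm - f_\pm)}$ along $S^\pm$. Hence Lemma \ref{lem:intvecfield} delivers a compactly supported Moser solution $(\varphi_t, f_t)$ on $\R \times V_S$, and Remark \ref{rem:frommosertocontactom} identifies $\phi \circ \varphi_1$ as a co-orientation preserving contactomorphism $(D_S, \xi_{\lambda}) \ra (D_T, \xi_{\lambda})$ which is the identity in a neighbourhood of the equator. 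The full contactisotopy is obtained by concatenating the vertical isotopy of Lemma \ref{lem:verticallydiffeomorphic}(iii) with the Moser flow, reinterpreted through Gray stability applied to the compactly supported family $\alpha_t = (1-t)\rmd b + t\rmd f + \lambda$ (which equals $\rmd b + \lambda$ near the equator throughout the homotopy).

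The only step that requires any genuine attention is the extension of $h_t^\pm$ to $H_t$: one must use the tubular parametrisation of $U^\pm$ (and the compact support of $h_t^\pm$ in $V_S$) to guarantee both compact support of $H_t$ inside $U^+ \cup U^-$ and the correct trace on $S^\pm$. Everything else -- the equivalence of the boundary condition with the characteristic equation, the integration via Moser, and the final composition with $\phi$ -- has already been set up in the preceding sections.
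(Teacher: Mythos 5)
Your proposal is correct and takes essentially the same route as the paper: extend $h_t^{\pm}$ to a compactly supported family $H_t$ on $\R\times V$ with support in $U^+\cup U^-$ and trace $h_t^{\pm}$ along $S^{\pm}$, verify the boundary condition of Lemma \ref{lem:decompmosprob} via the computation of Section \ref{subsec:achthebc}, invoke Lemma \ref{lem:intvecfield}, and close with Remark \ref{rem:frommosertocontactom} and Lemma \ref{lem:verticallydiffeomorphic}. One small normalisation worth preserving: the paper takes $H_t$ independent of $b$ on a smaller neighbourhood $U^{\pm}_1\subset U^{\pm}$, which yields $\partial_bH_t=0$ along $S^{\pm}$ and feeds into Remark \ref{rem:singsetshift}; with your multiplicative cutoff you should therefore take $\chi$ constant near $0$ (in particular $\chi'(0)=0$), and the final isotopy is the composition $\tilde\phi_t\circ\varphi_t$ with the convex vertical isotopy of Section \ref{subsec:interpolation} rather than a concatenation.
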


\begin{proof}
  Let $U^{\pm}_1$ be small neighbourhoods of $S^{\pm}$ in $V_S$
  whose closures in $V_S$ are contained in $U^{\pm}\!$, resp.
  For all $(b,v)\in U^{\pm}_1$ define $H_t(b,v)=h_t^{\pm}(v)$
  and extend $H_t$ to all of $\R\times V$ via cuting off
  inside $U^{\pm}\!$.
  The resulting $H_t$ is a smooth family of functions
  on $\R\times V$ with compact support
  contained in $U^+\cup\,U^-$
  as required in Lemma \ref{lem:intvecfield}.
  Choosing $U^{\pm}_1$ small enough ensures
  that the two $\pm$-branches of $H_t$ do not overlap.
  Using $\rmd_V\!f=\rmd(g_{\pm}-f_{\pm})$ restricted to $U^{\pm}\!$
  and reading the above computation
  of the right hand side of the boundary condition backwards,
  one obtains that $V_t$ solves
  the boundary condition (iv) in Lemma \ref{lem:decompmosprob}.
  With Lemma \ref{lem:intvecfield} we get
  that the Moser problem formulated in Section \ref{subsec:moserproblem}
  admits a compactly supported solution $(\varphi_t,f_t)$.
  The claim follows now as in Remark \ref{rem:frommosertocontactom}.
\end{proof}

\begin{rem}
\label{rem:singsetshift}
  The restriction of the solution obtained above to $S^{\pm}$
  satisfies $\partial_bH_t=0$,
  which results into
  $V_t|_{S^{\pm}}=X_{g_{\pm}-f_{\pm}}+X_{H_t}$.
\end{rem}


\subsection{Method of characteristics}
\label{subsec:characteristics}

For each $t\in[0,1]$ denote by
$\psi_s\equiv\psi_s^t$ the local flow of the Liouville vector field
$Y_{\lambda}-X_{f_{\pm}+t(g_{\pm}-f_{\pm})}$
treating each of the signs $\pm$ individually.
In view of a solution $h\equiv h_t^{\pm}$
of the characteristic equation for the boundary condition
in Proposition \ref{prop:charisteqn}
we make the following remarks:

Evaluating
  \[
    \psi_s^*h=
    \Big(\psi_s^*\big(h+(g_{\pm}-f_{\pm})\big)\Big)'
  \]
at $s=0$, where $(\ldots)'$ stands for $\frac{\rmd}{\rmd s}$,
yields
  \[
    h=
    L_{
      (Y_{\lambda}-X_{f_{\pm}+t(g_{\pm}-f_{\pm})})
      }
    \big(h+(g_{\pm}-f_{\pm})\big)
    \,,
  \]
which is the characteristic equation for the boundary condition
in Proposition \ref{prop:charisteqn}.
Observe that at equilibrium points a solution $h$ vanishes.
Further, variation of the constant yields
  \[
    \psi_s^*h=
    \rme^s
      \left(
        h-
        \int_0^s
          \rme^{-\sigma}
          \big(
            \psi_{\sigma}^*(g_{\pm}-f_{\pm})
          \big)'
        \rmd\sigma
    \right)
    \,,
  \]
which by chain rule leads back to the above characteristic equation.
In other words,
$h$ is a solution if and only if
along each flow line $c(s)\equiv c_t^{\pm}(s)=\psi_s(v)$
for each initial point $v\in V_S$ we have
  \[
    h\big(c(s)\big)=
    \rme^s
      \left(
        h(v)-
        \int_0^s
          \rme^{-\sigma}
          \rmd(g_{\pm}-f_{\pm})
          \big(c'(\sigma)\big)\,
        \rmd\sigma
    \right)
    \,.
  \]
The integral along any flow line $c$
contained in the open subset
\[
U_0:=
V_S\setminus\supp\!\big(\rmd(g_{\pm}-f_{\pm})\big)
\]
of $V_S$ vanishes.
Consequently,
$h$ growths exponentially in forward time $s$
along each flow line $c$ in $U_0$.

In order to achieve compact support for $h$
we set $h(v)=0$ for all $v\in U_{\partial}$,
where
\[
U_{\partial}:=
V_S\setminus\supp\,(g_{\pm}-f_{\pm})
\,.
\]
Notice, that the closure of $U_{\partial}$
is a neighbourhood of $\partial V_S$ and
that the set of points in $V_S$
through which there passes a flow line $c$
entirely contained in $\supp\,(g_{\pm}-f_{\pm})$
contains no interior point.
This follows from exponential $\rmd\lambda$-volume growth
along the Liouville flow $\psi_s$ in forward time $s$.


\subsection{Liouville shadow sets}
\label{subsec:liouvilleshadow}

We continue the discussion from Section \ref{subsec:characteristics}.
The shape $(S,V_S,f_{\pm})$
in $(\R\times V,\rmd b+\lambda)$
is of {\bf Liouville domain type} if
  	\begin{enumerate}
 	\item[(a)]
	  $Y_{\lambda}$ is positively transverse to
	  (the {\it a posteriori} smooth boundary) $\partial V_S$,
 	\item[(b)]
	  $f_{\pm}$ restricted to $\partial V_S$ is locally constant and
 	\item[(c)]
	  $\rmd f_{\pm}(Y_{\lambda})$ restricted to a neighbourhood
	  of $\partial V_S$ is negative/positive.
	\end{enumerate}
Item (a) means,
that the closure of the shadow set $V_S$ equipped with
$Y_{\lambda}$ is a Liouville domain.
Furthermore, by (b) and (c),
the Liouville vector field $Y_{\lambda}-X_{f_{\pm}}$ on $V_S$
turns $V_{f_{\pm}}\!$ into a Liouville domain,
where $V_{f_{\pm}}\!$ is obtained from $V_S$ by removing
a small open collar neighbourhood
that consists entirely of level sets of $f_{\pm}$.

Consequently,
as for any Liouville domain,
the local flow $\psi_s^0$ of $Y_{\lambda}-X_{f_{\pm}}$
is defined for all $s\leq0$,
has non-empty, compact skeleton
\[
\Big\{
v\in V_{f_{\pm}}
\,\Big|\,
\psi_s^0(v)\in V_{f_{\pm}}
\,\,
\forall s\geq0
\Big\}
=
\bigcap_{s\leq0}\psi_s^0(V_{f_{\pm}})
\]
without interior points.
Moreover,
the map $(s,w)\mapsto\psi_s^0(w)$
is a diffeomorphism of $(-\infty,0]\times\partial V_{f_{\pm}}\!$
onto the complement of the skeleton in $V_{f_{\pm}}\!$.

The equivalent shapes
$(S,V_S,f_{\pm})$ and $(T,V_T,g_{\pm})$
are {\bf similar} if
  	\begin{enumerate}
 	\item[(x)]
	$(S,V_S,f_{\pm})$ is of Liouville domain type,
 	\item[(y)]
	the skeleta of the local flows
	$\psi_s\equiv\psi_s^t$ of
        $Y_{\lambda}-X_{f_{\pm}+t(g_{\pm}-f_{\pm})}$
	for all $t\in[0,1]$ are equal and
 	\item[(z)]
	$g_{\pm}-f_{\pm}$ is locally constant
	in a neighbourhood of the skeleton
	of $Y_{\lambda}-X_{f_{\pm}}$.
	\end{enumerate}
In view of (x) and the equivalence of shapes,
$(T,V_T,g_{\pm})$ is of Liouville domain type as well.
The requirement that the skeleta of all
$Y_{\lambda}-X_{f_{\pm}+t(g_{\pm}-f_{\pm})}$
have a common neighbourhood
on which $g_{\pm}-f_{\pm}$ is locally constant
is equivalent to (y) and (z).

\begin{thm}
\label{thm:similar}
  Assume that the shapes
  $(S,V_S,f_{\pm})$ and $(T,V_T,g_{\pm})$
  are similar.
  Moreover, assume that 
  for all $w\in\partial V_{f_{\pm}}\!$
  and for all $t\in[0,1]$
  \[
        \int_{-\infty}^0
          \rme^{-\sigma}
          \rmd(g_{\pm}-f_{\pm})
          \big(c'(\sigma)\big)\,
        \rmd\sigma
        =0
    \,,
  \]
  where $c(s)\equiv c_t^{\pm}(s)=\psi_s^t(w)$
  is the flow line of
  $Y_{\lambda}-X_{f_{\pm}+t(g_{\pm}-f_{\pm})}$
  starting at $w$.
  Then $(D_S,\xi_{\lambda})$ and $(D_T,\xi_{\lambda})$
  are contactisotopic relative a neighbourhood
  of the common equatorial set.
\end{thm}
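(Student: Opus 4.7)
The plan is to reduce to Proposition \ref{prop:charisteqn} by constructing, for each sign $\pm$ and each $t\in[0,1]$, a smooth compactly supported function $h_t^{\pm}$ on $V_S$ solving the characteristic equation. I fix the sign $\pm$ and abbreviate $F:=g_{\pm}-f_{\pm}$. The similarity assumption guarantees that the local flows $\psi_s^t$ of $Y_{\lambda}-X_{f_{\pm}+tF}$ on $V_{f_{\pm}}$ share a common skeleton $K$ (condition (y)) and that $F$ is locally constant on a common neighbourhood $U$ of $K$ (condition (z)).

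My first move is to arrange $\partial V_{f_{\pm}}\subset U_{\partial}:=V_S\setminus\supp F$. Since $F$ vanishes near $\partial V_S$ by equivalence of shapes, this is achievable by shrinking the collar removed to form $V_{f_{\pm}}$ from $V_S$. Using the diffeomorphism $(s,w)\mapsto\psi_s^t(w)$ from $(-\infty,0]\times\partial V_{f_{\pm}}$ onto $V_{f_{\pm}}\setminus K$, I define $h_t^{\pm}$ by the initial condition $h_t^{\pm}|_{\partial V_{f_{\pm}}}\equiv 0$ and the variation-of-constants formula from Section \ref{subsec:characteristics}:
\[
h_t^{\pm}\big(\psi_s^t(w)\big):=-\rme^s\int_0^s\rme^{-\sigma}\,\rmd F\big(c'(\sigma)\big)\,\rmd\sigma,\qquad s\le 0,
\]
where $c(\sigma):=\psi_{\sigma}^t(w)$. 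By construction this solves the characteristic equation on $V_{f_{\pm}}\setminus K$. Since $F$ vanishes near $\partial V_{f_{\pm}}$, the integrand vanishes for $\sigma$ sufficiently close to zero, so $h_t^{\pm}$ itself vanishes on a one-sided neighbourhood of $\partial V_{f_{\pm}}$ inside $V_{f_{\pm}}$; extending by zero to the collar $V_S\setminus V_{f_{\pm}}$ therefore gives a smooth extension that still solves the equation trivially on the collar.

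I expect the main obstacle to be smoothness across the skeleton $K$. Without the integral hypothesis one would only obtain $h_t^{\pm}\to 0$ as points approach $K$, which is insufficient for smoothness through a possibly non-manifold set. The hypothesis
\[
\int_{-\infty}^0\rme^{-\sigma}\,\rmd F\big(c'(\sigma)\big)\,\rmd\sigma=0,
\]
combined with (z), resolves this: for each $(t,w)$ there exists $s_0=s_0(t,w)<0$ beyond which $c(\sigma)\in U$ and thus $\rmd F(c'(\sigma))=0$; splitting $\int_{-\infty}^0=\int_{-\infty}^{s_0}+\int_{s_0}^0=\int_{s_0}^0$ and invoking the hypothesis forces $\int_s^0\rme^{-\sigma}\,\rmd F(c'(\sigma))\,\rmd\sigma=0$ for all $s\le s_0$, whence $h_t^{\pm}\equiv 0$ along the backward trajectory beyond $s_0$. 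Varying $(t,w)$ over the compact set $[0,1]\times\partial V_{f_{\pm}}$, one obtains a common open neighbourhood of $K$ on which $h_t^{\pm}$ vanishes uniformly in $t$, so the extension by zero across $K$ is smooth and depends smoothly on $t$. Proposition \ref{prop:charisteqn} then yields the desired contactisotopy relative to a neighbourhood of the common equatorial set.
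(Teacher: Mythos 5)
Your proposal is correct and follows the paper's proof essentially verbatim: set $h_t^{\pm}\equiv 0$ on $\partial V_{f_{\pm}}$, propagate by variation of constants along the backward Liouville flow of $Y_{\lambda}-X_{f_{\pm}+t(g_{\pm}-f_{\pm})}$, and observe that the similarity conditions (y), (z) together with the vanishing-integral hypothesis force $h_t^{\pm}\equiv 0$ on a whole neighbourhood of the skeleton, giving smoothness and compact support. The only deviation is that you spell out the compactness argument yielding a threshold $s_0$ uniform in $(t,w)\in[0,1]\times\partial V_{f_{\pm}}$, which the paper leaves implicit in its remark that the integral is constant for $|s|$ sufficiently large.
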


\begin{proof}
  The aim is to find a solution $h\equiv h_t^{\pm}$
  of the problem formulated in
  Proposition \ref{prop:charisteqn}.
  By Section \ref{subsec:characteristics}
  it suffices to find a smooth $h$
  solving the formulated integral equation
  along all flow lines $c$.
  We set $h=0$ on $U_0$ in $V_S$,
  so that $h(w)=0$ for all $w\in\partial V_{f_{\pm}}\!$
  and $h=0$ on the skeleton by (y) and (z).
  By exponential growth
  there is no other choice on the skeleton.

  The complement of the skeleton in $V_{f_{\pm}}\!$
  is diffeomorphic to $(-\infty,0]\times\partial V_{f_{\pm}}\!$
  via $(s,w)\mapsto\psi_s^t(w)$.
  Define $h(v)$ for all $v\in V_{f_{\pm}}\!$
  not contained in the skeleton by
  \[
    h\big(c(s)\big)=
    \rme^s
        \int_s^0
          \rme^{-\sigma}
          \rmd(g_{\pm}-f_{\pm})
          \big(c'(\sigma)\big)\,
        \rmd\sigma
    \,,
  \]
  where $v=c(s)$ depends on the parameters
  $(s,w,t)\in(-\infty,0]\times\partial V_{f_{\pm}}\!\times[0,1]$
  smoothly and uniquely (separately for each sign $\pm$).
  The integral exists for $s\ra-\infty$ by (y) and (z)
  as it is constant for $|s|$ sufficiently large.
  Hence,
  by the integral assumption,
  $h=0$ in a neighbourhood of the skeleton
  as it needs to be.
\end{proof}


\subsection{Symmetric Stein type shadow sets}
\label{subsec:symmsteinshadow}

We present an example of the situation in
Section \ref{subsec:liouvilleshadow}:
  Assume that $(V,\lambda)$ admits a
  {\bf strictly plurisubharmonic potential},
  i.e.\ a smooth proper map $\varrho\co V\ra[0,\infty)$
  such that $\lambda=-\rmd\varrho\circ J$
  for a $\rmd\lambda$-compatible
  almost complex structure $J$ on $V$,
  see Section \ref{subsec:tamegeom} for an example.
  Hence,
  $Y_{\lambda}=\nabla\!\varrho$
  and $X_{\varrho}=J\nabla\!\varrho$,
  where the gradient is taken w.r.t.\
  the induced metric $\rmd\lambda(\,.\,,J\,.\,)$.
  
  This allows the following simplification:
  Consider smooth functions $f$ on an open sublevel set
  $\{\varrho<\varrho_*\}$ of $V$,
  where $\varrho_*$ is a positive constant
  not equal to a critical value of $\varrho$.
  Assume that $f$ factors smoothly through $\varrho$,
  so that $f(v)$ can be written as $\hat{f}\big(\varrho(v)\big)$
  for all $v\in\{\varrho<\varrho_*\}$
  and a smooth function $\hat{f}\co\R\ra\R$.
  Then {\it for any $g=\hat{g}(\varrho)$
  we can read $g(c)$ as taken along a flow line $c$
  of either $Y_{\lambda}-X_f$ or $Y_{\lambda}$
  equivalently}.
  To see this observe that
  by chain rule $X_f=\hat{f}'(\varrho)X_{\varrho}$.
  Hence,
  $Y_{\lambda}-X_f=(\mathbb{I}-\hat{f}'(\varrho)J)\nabla\!\varrho$,
  so that $Y_{\lambda}-X_f$ and $Y_{\lambda}$
  both vanish precisely at the critical points of $\varrho$.
  In the complement of the critical points of $\varrho$
  we use flow boxes over level sets of $\varrho$
  taken w.r.t.\ $X=\nabla\!\varrho/|\nabla\!\varrho|^2$.
  In there,
  $\varrho$ is given by a linear function
  in flow time with slope $1$;
  level sets of $\varrho$ are the orthogonal slices.
  The claim follows
  since $Y_{\lambda}$ is positively proportional to $X$
  while $X_f$ is orthogonal.
  
  We remark that the skeleton of $Y_{\lambda}-X_f$
  consists of all stable manifolds of $Y_{\lambda}-X_f$
  of the critical points of $\varrho$ in $V_S$
  and $\rmd\varrho(Y_{\lambda}-X_f)=|\nabla\!\varrho|^2>0$
  on $\{\rmd\varrho\neq0\}$.
  The skeleton of $Y_{\lambda}-X_f$ is not changed
  by adding to $f=\hat{f}(\varrho)$
  a smooth perturbation function $\chi=\hat{\chi}(\varrho)$
  with the following properties:
  $\rmd\chi$ has compact support in the interior of 
  the maximal collar neighbourhood of $\{\varrho\leq\varrho_*\}$
  obtained w.r.t.\ to $X$
  and vanishes near $\{\varrho=\varrho_*\}$.
  
  A regular compact hypersurface $S$ in $\R\times V$
  is determined by the set solutions $(b,v)$
  of the equation $b^2+\varrho(v)=\varrho_*$.
  The shape structure is induced by the functions
  $f_{\pm}(v)=\pm\sqrt{\varrho_*-\varrho(v)}$
  defined for all $v$ in $V_S=\{\varrho<\varrho_*\}$.
  Consider perturbation functions $\chi_{\pm}=\hat{\chi}_{\pm}(\varrho)$
  with
  \[
        \int_{-\infty}^0
          \rme^{-\sigma}
          \rmd\chi_{\pm}
          \big(c'(\sigma)\big)\,
        \rmd\sigma
        =0
        \,,
  \]
  where
  \[
  \rmd\chi_{\pm}\big(c'(\sigma)\big)
  =\Big(\chi_{\pm}\big(c(\sigma)\big)\Big)'
  \]
  can be read with flow line $c$ taken w.r.t.\
  $Y_{\lambda}-X_{f_{\pm}+t(g_{\pm}-f_{\pm})}$
  or $Y_{\lambda}$ equivalently.
  {\it The perturbation $g_{\pm}:=f_{\pm}+\chi_{\pm}$
  yields a shape $\big(T,V_S,f_{\pm}+\chi_{\pm}\big)$
  with domain contactisotopic to the one of $(S,V_S,f_{\pm})$}.
  This follows with Theorem \ref{thm:similar},
  in which the resulting function
  \[
    h\big(c(s)\big)=
    \rme^s
        \int_s^0
          \rme^{-\sigma}
          \rmd\chi_{\pm}
          \big(c'(\sigma)\big)\,
        \rmd\sigma
  \]
  is of type $h=\hat{h}(\varrho)$ also.

\begin{proof}
[{\bf Proof of Theorem \ref{thmintr:contactbodies}
-- if part}]
Identify $\C^n$ with $\R^{2n}$ by setting
$z_j=x_j+\rmi y_j$ for all $j=1,\ldots,n$ and recall
that the standard Stein structure on $\C^n$
is defined through the strictly plurisubharmonic function
$\varrho(\bfz)=\tfrac14|\bfz|^2$.
Observe that the radial Liouville primitive of $\rmd\bfx\wedge\rmd\bfy$
is given by
$-\rmd\varrho\circ\rmi=\tfrac12(\bfx\rmd\bfy-\bfy\rmd\bfx)$.

Consider the radially symmetric function
$g:=g_{\pm}-f_{\pm}$ defined on $\R^{2n}$
that has support in the open round ball
$B_{r_0}\!(0)$.
Identify $g(\bfz)=g(r)$, where $r=|\bfz|$,
and observe that by the root lemma
$g=\hat{g}(\varrho)$ factors through a smooth function
$\hat{g}=\tilde{g}\circ2\co\R\ra\R$
as $r\mapsto g(r)$ is even,
see Lemma \ref{lem:rootlem}.
Therefore,
as argued in Section \ref{subsec:symmsteinshadow},
we can work with the flow
$\psi_s(\bfz)=\rme^{s/2}\bfz$ of
the autonomous radial Liouville vector field
$\tfrac12(\bfx\partial_{\bfx}+\bfy\partial_{\bfy})$
and no Hamiltonian perturbations are necessary.

For $\bfz_0\in\partial B_{r_0}\!(0)$
denote the initiated flow line by
$c(s)=\rme^{s/2}\bfz_0$ for all $s<0$.
For all non-zero $\bfz\in  B_{r_0}\!(0)$
write $\bfz=\rme^{s/2}\bfz_0$ and define
$h(\bfz)=h(r)$ by
  \[
    h(\bfz)=
    \rme^s
        \int_s^0
          \rme^{-\sigma}
          \Big(g\big(r_0\rme^{\sigma/2}\big)\Big)'
          \,
        \rmd\sigma
        \,.
  \]
  With chain rule the integrand becomes
  \[
  \rme^{-\sigma}
  \cdot
  g'\big(r_0\rme^{\sigma/2}\big)
  \cdot
  \frac{1}{2}r_0
  \rme^{\sigma/2}
  \,,
  \]
  so that the substitution $\tau=r_0\rme^{\sigma/2}$,
  which gives $\rmd\tau=\frac12\tau\rmd\sigma$
  and $r=r_0\rme^{s/2}$, finally leads to
  \[
    h(\bfz)=
    r^2
        \int_r^{r_0}
          \frac{g'(\tau)}{\tau^2}
          \,
        \rmd\tau
        \,.
  \]
  
  In order to show smoothness at $\bfz=0$
  observe that $(g')'(0)=0$ by assumption.
  The Morse lemma implies
  that the integrant is equal to $(g')_2(\tau)$,
  see Section \ref{subsec:mlandcons}.
  Because $(g')_2$ is an odd smooth function on $\R$,
  the function given by
  $G(r)=\int_0^r(g')_2(\tau)\rmd\tau$
  is even and smooth on $\R$.
  By the root lemma we find a smooth function
  $\tilde{G}$ on $\R$
  such that $G(r)=\tilde{G}(r^2)$,
  see Lemma \ref{lem:rootlem}.
  Therefore,
  $h(\bfz)=|\bfz|^2\big(G(r_0)-\tilde{G}(|\bfz|^2)\big)$
  is smooth in $\bfz=0$ also.
  As in the proof of Theorem \ref{thm:similar}
  the claim follows with 
  Proposition \ref{prop:charisteqn}
  and Section \ref{subsec:characteristics}.
\end{proof}


\section{Cotangent shadows\label{sec:cotangshadow}}

We address Theorem
\ref{thmintr:cobocotangentconstr} and \ref{thmintr:cobocotangentobstr}.


\subsection{Tame geometry\label{subsec:tamegeom}}

As explained in \cite[Section 5]{kwz22}
the Liouville form $\lambda\equiv\lambda_Q$
and the Riemannian metric of $Q$
induce a tame structure on $T^*Q$:
The dual connection of the Levi-Civita connection on $Q$
defines a splitting of $TT^*Q=\HH\oplus\VV$
into horizontal $\HH$ and vertical distribution $\VV$.
An almost complex structure $J\equiv J_{T^*Q}$
is defined by sending horizontal/vertical
to $-$vertical/$+$horizontal vectors.
This yields the Sasaki metric
$\rmd\lambda(\,.\,,J\,.\,)$
on $T^*Q$ turning the splitting into an orthogonal one
and making $J$ compatible with the symplectic form $\rmd\lambda$.
In addition
\[
\lambda=
-\rmd\varrho\circ J\,,
\]
where we denote the kinetic energy function
by $\varrho(u):=\frac12|u|^2$ for $u\in T^*Q$.
Hence,
we are in the situation of
Section \ref{subsec:symmsteinshadow}.
In particular,
taking the gradient w.r.t.\ $\rmd\lambda(\,.\,,J\,.\,)$
yields that the Liouville vector field of $\lambda$
and the Hamiltonian vector field of $\varrho$
are equal to $Y_{\lambda}=\nabla\!\varrho$
and $X_{\varrho}=J\nabla\!\varrho$, resp.
Here, as in Section \ref{subsec:contactisation},
$Y_{\lambda}$ denotes the fibrewise radial
Liouville vector field on $T^*Q$
defined to be the $\rmd\lambda$-dual
of the canonical Liouville form $\lambda$ on $T^*Q$.

Notice that
\[
\lambda(X_{\varrho})=
Y_{\lambda}(\varrho)=
2\varrho
\]
by $2$-homogeneity of $\varrho$
in fibrewise radial direction.


\subsection{Fibred radial symmetry\label{subsec:fibwradsymm}}

Consider a shape $(S,V_S,f_{\pm})$ in $\R\times T^*Q$
with shadow set $V_S$ equal to the interior of $D_{r_0}T^*Q$.
Assume that both functions $f_{\pm}$ are fibred radially symmetric.
Write $f$ for one of the functions $f_{\pm}$.

Being fibred radially symmetric
allows the identification of $f(u)$ with $f(r)$ for $r=|u|$
by restricting to a line in any fibre of $T^*Q$.
The resulting function $\R\ra\R$ is even,
so that the root Lemma \ref{lem:rootlem} yields
a smooth function $\tilde{f}\co\R\ra\R$
with $f(u)=\tilde{f}(r^2)$ for all $u\in D_{r_0}T^*Q$
and $r=|u|$.
Setting $\hat{f}=\tilde{f}\circ2$ we obtain $f=\hat{f}(\varrho)$.
Restricting to the interior of $D_{r_0}T^*Q$,
chain rule yields
\[
X_f=
\hat{f}'(\varrho)X_{\varrho}
\]
for the respective Hamiltonian vector fields.
Notice, that the prefactor tends to $\mp\infty$ for $r\ra r_0$,
where the sign is determined by
whether $f$ defines the exit or the entrance set.

The projection of the characteristic vector field $X$ on $S$
computed in Section \ref{subsec:contactisation}
to the interior of $D_{r_0}T^*Q$
is the vector field
$Y_{\lambda}-X_f=(\mathbb{I}-\hat{f}'(\varrho)J)\nabla\!\varrho$
up to sign $\pm$.
The zero set is equal to the zero section $Q$ of $T^*Q$.
Therefore,
the singular set of the characteristic foliation $S_{\xi_Q}$
is given by $\{f(0)\}\times Q$,
along which the contact structure $\xi_Q$
is horizontal, equal to $\{f(0)\}\times T(T^*Q)|_Q$,
see Section \ref{subsec:chrfol}.
The sign is positive/negative provided that $f$ corresponds to 
the exit/entrance set of $S$,
see Section \ref{subsec:hamrepr}.
The equator $E$ of $S$ is $\{f(r_0)\}\times S_{r_0}T^*Q$.
Along $E$ the contact structure $\xi_Q$ is nowhere horizontal
as $\lambda$ is trivial precisely on the zero section.
Therefore,
all characteristics of $S_{\xi_Q}$
connect the positive singular set with the negative one
intersecting the equator transversally
{\it a posteriori} in a unique point.
The closures of the two complements of the equator
are called {\bf half characteristic}. 
The projections to $T^*Q$ along the $\R$-factor,
which can be parametrised by the flow lines
of the characteristic vector field $\pm(Y_{\lambda}-X_f)$
along the interior of $D_{r_0}T^*Q$,
connect each cosphere bundle $S_rT^*Q$, $r\in(0,r_0)$,
with the boundary $S_{r_0}T^*Q$
via arcs of finite length.


\subsection{Half characteristics dynamic\label{subsec:halfchardyn}}

The flow of the Liouville vector field $Y_{\lambda}$ on $T^*Q$
is given by $(s,u)\mapsto\rme^su$.
Recall,
$X_{\varrho}$ integrates the cogeodesic flow $\gamma_s$
on $T^*Q$,
cf.\ \cite[p.~27-31]{gei08}.
Both flows can be used to express
the flow $\psi_s$ of the vector field $Y_{\lambda}-X_f$
on the interior of $D_{r_0}T^*Q$.
We claim that
\[
\psi_s(u)=\rme^s\gamma_{\sigma_u(s)}(u)
\]
for all $u\in T^*Q$, $|u|<r_0$, and $s\in\R$, where
\[
\sigma_u(s):=
\int_s^0
  \rme^{\sigma}\hat{f}'\big(\varrho(\rme^{\sigma}u)\big)
\rmd\sigma\,.
\]
Indeed,
an application of
$i_{[X,Y]}=L_Xi_Y-i_YL_X$, see \cite[Proposition V.5.3]{lng02},
to $\rmd\lambda$
using $\lambda(X_{\varrho})=Y_{\lambda}(\varrho)=2\varrho$
yields the $\rmd\lambda$-dual of the equation
$[Y_{\lambda},X_{\varrho}]=X_{\varrho}$,
which in turn shows that the pull back of $X_{\varrho}$
along the flow of $-Y_{\lambda}$ equals $\rme^{-s}X_{\varrho}$.
Pretending for the moment
that $\psi_s$ just abbreviates
the right hand side of the claimed equation
we get that $\psi_0=\id$ and
$
\frac{\rmd}{\rmd s}\psi_s=
Y_{\lambda}|_{\psi_s}+
\sigma'_{(\,.\,)}(s)\cdot\rme^{-s}X_{\varrho}|_{\psi_s}
$.
As the cogeodesic flow $\gamma_{\sigma}$
preserves the kinetic energy $\varrho$
and, hence,
$\varrho(\rme^{\sigma}\,.\,)=
\varrho(\psi_s)$ holds,
computing $\sigma'$ finally shows
that $\psi_s$ is indeed the flow of $Y_{\lambda}-X_f$.

Observe that
$\rme^a\gamma_{\sigma}\circ\rme^{-a}=\gamma_{\rme^{-a}\sigma}$
holds for all $a\in\R$,
as the conjugated flow of $\gamma_{\sigma}$
is the flow of $\rme^{-a}X_{\varrho}$.
Therefore,
we can write
$\rme^{s-a}\gamma_{\rme^{-a}\sigma_u(s)}(\rme^au)$
for $\psi_s(u)$,
so that with $\rme^a=1/|u|$ we get
\[
\psi_s(u)=\rme^s|u|\cdot\gamma_{|u|\sigma_u(s)}
\left(\frac{u}{|u|}\right)
\]
for all $u\neq0$ with $|u|<r_0$.


\subsection{Oriented length\label{subsec:orientedlength}}

For given $u\in T^*Q$ of colength $|u|\in(0,r_0)$
consider the curve $s\mapsto\pi\big(\psi_s(u)\big)$ in $Q$
obtained by foot point projection.
By Section \ref{subsec:halfchardyn}
the curve reparametrises a geodesic arc in $Q$,
to which the metric dual of $u$ is tangent to.
Here,
we allow the reparametrisation to go forward and backward in time.
We will compute the oriented length $\ell_u(s)$
for time parameters between $0$ and $s$,
where the forward/backward oriented arcs
contribute positively/negatively.
With the arguments of \cite[Section 3.4]{zeh13}
the action
\[
a_u(s)=\int_s^0U^*\lambda
\]
of the lift $U(s)=\gamma_{\sigma_u(s)}(u)$
to $S_{|u|}T^*Q$ 
is the area $a_u(s)=|u|\ell_u(s)$
of the surface obtained by convex interpolation
with $s\mapsto\pi\big(\psi_s(u)\big)$.

The curve $s\mapsto\pi\big(\psi_s(u)\big)$
alternatively can be obtained from
\[
s\longmapsto
\pi\circ\gamma_{|u|\sigma_u(s)}
\left(\frac{u}{|u|}\right)
\,.
\]
Fixing $u$ and $s$ the oriented length of
the rescaled curve
\[
[0,1]\ni\tau\longmapsto
\pi\circ\gamma_{|u|\sigma_u(s\cdot\tau)}
\left(\frac{u}{|u|}\right)
\]
is equal to $\ell_u(s)$.
Removing consecutive subarcs of the same length
but with opposite sign 
induces a retraction relative end points
to the rescaled geodesic arc
\[
[0,1]\ni\tau\longmapsto
\pi\circ\gamma_{|u|\sigma_u(s)\cdot\tau}
\left(\frac{u}{|u|}\right)
\]
keeping the oriented length.
The retraction can be defined
by convex interpolation of the parameter functions
$|u|\sigma_u(s\cdot\tau)$ and $|u|\sigma_u(s)\cdot\tau$
and stays inside the trace of the geodesic defined by $u$.
An application of Stokes theorem
to the definition via the action yields the result.
Hence,
\[
\ell_u(s)=|u|\sigma_u(s)
\,.
\]
Moreover,
the proof of the root Lemma \ref{lem:rootlem} shows that
\[
\hat{f}'\big(\varrho(u)\big)=
2\tilde{f}'(r^2)=
\frac{1}{r}f'(r)
\]
with $r=|u|$ for all $u\in T^*Q$ with $|u|\in(0,r_0)$,
so that
\[
\ell_u(s)=
\int_s^0
  f'\big(\rme^{\sigma}|u|\big)
\rmd\sigma
\]
is the desired oriented length $\ell_u(s)$.


\subsection{Characteristic half
length\label{subsec:charhalflength}}

Let $u_0$ in $S_{r_0}T^*Q$ be a boundary point 
and take a sequence $u_{\varepsilon}\ra u_0$
with $|u_{\varepsilon}|=r_0-\varepsilon$
for $\varepsilon>0$ small.
We claim that for all $s<0$ fixed the limit
\[
 \ell_{u_0}(s):=\lim_{\varepsilon\ra0}\ell_{u_{\varepsilon}}(s)
\]
exists and is finite.
Indeed,
by Section \ref{subsec:orientedlength},
$\ell_{u_{\varepsilon}}(s)$ is the oriented length
of the curve
$[0,1]\ni\tau\mapsto\pi\big(\psi_{s\cdot\tau}(u_{\varepsilon})\big)$
and only depends on $|u_{\varepsilon}|$.
The lift
$\big(f(\psi_{s\cdot\tau}(u_{\varepsilon})),\psi_{s\cdot\tau}(u_{\varepsilon})\big)$
to the graph $S$ of $f$
parametrises subarcs of half characteristics,
which intersects the equator for $\varepsilon\ra0$.
By the concluding remark of Section \ref{subsec:fibwradsymm}
the resulting subarcs admit reparametrisations
as integral curves of a characteristic vector field of $S_{\xi_Q}$
with finite time intervals for all $\varepsilon\ra0$.
As the oriented length is invariant under strictly increasing
time reparametrisations the claim follows.

In order to show finiteness towards the singular set
we make the following observations:
With the substitution $\tau=\rme^{\sigma}|u|$,
which yields $\rmd\tau=\tau\rmd\sigma$,
and $r=\rme^s|u|$ the oriented length $\ell_u(s)$, $|u|\in(0,r_0)$, equals
\[
L_u(r):=
\int_r^{|u|}
          \frac{f'(\tau)}{\tau}
\rmd\tau
\]
corresponding to the time reparametrised flow
\[
\Psi_r(u)=r\gamma_{L_u(r)}
\left(\frac{u}{|u|}\right)
\,.
\]
By the Morse lemma from Section \ref{subsec:mlandcons},
the quantity $L_u(r)$, and hence $\Psi_r(u)$,
is also defined for $r$ non-positive,
so that by the previous remarks
\[
L_{u_0}(-r_0)=
\int_{-r_0}^{r_0}
          \frac{f'(\tau)}{\tau}
\rmd\tau
\]
is finite for all boundary points $u_0$ in $S_{r_0}T^*Q$.
The quantity $L_{u_0}(-r_0)$ is the oriented length
of the curve
\[
[-r_0,r_0]\ni r\longmapsto\pi\circ\gamma_{L_{u_0}(r)}
\left(\frac{u_0}{|u_0|}\right)
\,,
\]
which lifts to a {\bf doubled half characteristic}
\[
[-r_0,r_0]\ni r\longmapsto
\big(f(\Psi_r(u_0)),\Psi_r(u_0)\big)
\]
to the graph $S$ of $f$.
At $r=0$ the lift passes through the singular set
and reaches the equator on both ends.

\begin{rem}
Let $(S,V_S,f_{\pm})$ be a shape in $\R\times T^*Q$
with shadow set $V_S$
equal to the interior of $D_{r_0}T^*Q$
and $f_{\pm}$ fibred radially symmetric.
According to the orientation
of characteristic vector fields
in Section \ref{subsec:contactisation}
we have that
\[
L_{u_0}(0)=\pm\LLL(f_{\pm})
\quad\text{and}\quad
L_{u_0}(-r_0)=\pm2\LLL(f_{\pm})
\]
for the oriented length of
a half characteristic and
doubled half characteristic
on the graph of $f_{\pm}$.
\end{rem}


\subsection{Normalised cogeodesic twists\label{subsec:normcogeotwists}}

  The normalised cogeodesic flow $\eta_{\theta}$
  on the deleted cotangent bundle
  $T^*Q\setminus Q$ appears
  as the Hamiltonian flow
  of the length function $l(u):=|u|$, $u\in T^*Q$.
  The generating Hamiltonian vector field satisfies
  $X_l=\frac{1}{l}X_{\varrho}$, so that
  $\gamma_{\theta}=\eta_{l\theta}$.
  Observe that $|X_l|_J=1$ taking the Sasaki metric
  $\rmd\lambda(\,.\,,J\,.\,)$ on $T^*Q$  
  and that $\lambda(X_l)=l$,
  the latter being equivalent to
  $\eta_{\theta}^*\lambda=\lambda$.
  Hence, $\eta_{\theta}$ is a $\lambda$-preserving
  exact symplectomorphism
  on $T^*Q\setminus Q$ for each $\theta\in\R$.
  
  Given any smooth function $\theta\co\R\ra\R$
  we define an exact symplectomorphism
  on $T^*Q\setminus Q$
  by setting
  \[
  \Phi:=\eta_{\theta(l)}
  \,.
  \]
  The inverse equals
  $\Phi^{-1}=\eta_{-\theta(l)}$
  because $\eta_{\theta}$ preserves $l$.
  Computing
  \[
  T\Phi=
  T\eta_{\theta(l)}+
  \big(\theta'(l)\rmd l\big)\cdot X_l|_{\Phi}
  \]
  we obtain
  \[
  \Phi^*\lambda_Q=
  \lambda_Q+\rmd\Theta
  \]
  as $\Phi$ preserves the length of covectors.
  Here, we define
  \[
  \Theta(u):=
  -\int_{|u|}^{r_0}\tau\theta'(\tau)\rmd\tau
  \,,
  \]
  which is fibred radially symmetric.
  
  Choose $\theta\co\R\ra\R$ to be equal to $0$ near $0$
  and constantly equal to $\theta_0=\theta(r_0)$
  near $[r_0,\infty)$.
  Hence,
  $\Theta$ is constant near $0$
  and has support in $[0,r_0)$.
  As $\eta_0=\id$ we see that $\Phi$
  extends over the zero section.
  Furthermore
  $\Phi$ equals $\eta_{\theta_0}$
  in a neighbourhood of $S_{r_0}T^*Q$.
  Define the
  {\bf normalised cogeodesic twist}
  by
  \[
  \varphi(b,u):=\big(b-\Theta(u),\Phi(u)\big)
  \]
  for all $(b,u)\in\R\times T^*Q$.
  Then $\varphi$ is a strict contactomorphism
  preserving the contact form $\rmd b+\lambda_Q$
  and is equal to
  $(b,u)\mapsto\big(b,\eta_{\theta_0}(u)\big)$
  for $|u|$ contained in a neighbourhood of $[r_0,\infty)$.
  As $\eta_{\theta_0}$ preserves $l$,
  the strict contactomorphism $\varphi$
  sends any fibred radially symmetric shape
  $(S,V_S,f_{\pm})$ with $V_S=D_{r_0}T^*Q$
  to the equivalent shape $(T,V_T,g_{\pm})$
  preserving the equator,
  where $g_{\pm}=f_{\pm}-\Theta$
  are fibred radially symmetric.
  Computing the difference yields
  \[
  \LLL(f_{\pm})-\LLL\big(f_{\pm}-\Theta\big)
  =\pm\int_0^{r_0}\theta'(\tau)\rmd\tau
  =\pm\theta_0
  \,.
  \]
In particular,
the normalised cogeodesic twist
preserves the characteristic length
\[
\Sigma(T)=
\LLL\big(f_+-\Theta\big)+
\LLL\big(f_--\Theta\big)=
\Sigma(S)
\]
for all times $\theta_0$.

\begin{rem}
\label{rem:bessetwist}
  If $Q$ is a Besse manifold,
  so that $\eta_{\theta}$ is periodic
  with minimal period $2P$,
  then $\varphi$ restricts to
  $\id\times\,\eta_{kP}$ along a neighbourhood
  of the equator taking $\theta_0=kP$
  for any integer $k$.
  In this case we get
  $\LLL(f_{\pm})-\LLL\big(f_{\pm}-\Theta\big)=\pm kP$.
  
  If $Q=S^n$ is the unit sphere in $\R^{n+1}$
  the flow $\eta_{\theta}$ is periodic
  with minimal period $2\pi$.
  The map $\eta_{\pi}$
  extends over the zero section
  being equal to the antipodal map
  $(\bfq,\bfp)\mapsto(-\bfq,-\bfp)$
  written in coordinates,
  cf.\ \cite[Section 4]{bgz19} or \cite[Section 2.2]{vkn05}.
  Choosing $\theta(0)=j\pi$ to be an appropriate
  multiple of $\pi$ yields the so-called
  {\bf Dehn-Seidel twists} with
  $\LLL(f_{\pm})-\LLL\big(f_{\pm}-\Theta\big)=\pm(k-j)\pi$.
\end{rem}

\begin{proof}[{\bf Proof of Theorem \ref{thmintr:cobocotangentconstr}}]
After an application of an
normalised cogeodesic twist
as in Section \ref{subsec:normcogeotwists}
with time $\theta_0=\LLL(f_+)-\LLL(g_+)$,
which preserves $\Sigma(S)$,
we can assume that
$\LLL(f_{\pm})=\LLL(g_{\pm})$, resp.,
keeping the notation.
The argumentation now goes along the lines
of the proof of the {\it if part} of
Theorem \ref{thmintr:contactbodies}
applied to the situation of
Sections \ref{subsec:tamegeom}
and \ref{subsec:fibwradsymm}.

For the in the interior of $D_{r_0}T^*Q$
compactly supported radially symmetric function
$g:=g_{\pm}-f_{\pm}$ we
identify $g(u)=g(r)$, where $r=|u|$,
so that by the root Lemma \ref{lem:rootlem}
$g=\hat{g}(\varrho)$ factors through a smooth function
$\hat{g}=\tilde{g}\circ2\co\R\ra\R$
as $r\mapsto g(r)$ is even.
By Section \ref{subsec:symmsteinshadow}
we are free to work with the flow
$\psi_s(u)=\rme^su$ of
the autonomous radial Liouville vector field
$Y_{\lambda}$.

For $u_0\in S_{r_0}T^*Q$
consider the flow line
$c(s)=\rme^su_0$ for all $s<0$.
For all $u\in T^*Q$ with $|u|\in(0,r_0)$
write $u=\rme^su_0$ and define $h(u)=h(r)$ by
  \[
    h(u)=
    \rme^s
        \int_s^0
          \rme^{-\sigma}
          \Big(g\big(r_0\rme^{\sigma}\big)\Big)'
          \,
        \rmd\sigma
        \,.
  \]
  By chain rule the integrand equals
  $g'\big(r_0\rme^{\sigma}\big)$,
  so that the substitution $\tau=r_0\rme^{\sigma}$,
  which gives $\rmd\tau=\tau\rmd\sigma$
  and $r=r_0\rme^s$, yields
  \[
    h(u)=r
        \int_r^{r_0}
          \frac{g'(\tau)}{\tau}
          \,
        \rmd\tau=
        -r
        \int_0^r
          \frac{g'(\tau)}{\tau}
          \,
        \rmd\tau
        \,.
  \]
  The last equality follows from
  $\LLL(g_{\pm})=\LLL(f_{\pm})$, resp.
  
  It remains to show smoothness near $\{u=0\}$.
  By the Morse lemma in Section \ref{subsec:mlandcons}
  the integrant is equal to $(g')_1(\tau)$,
  which is an even smooth function on $\R$.
  Hence,
  the function
  $h(r):=-r\int_0^r(g')_1(\tau)\rmd\tau$
  is even and smooth on $\R$,
  so that the root Lemma \ref{lem:rootlem} yields
  a smooth function
  $\tilde{h}$ on $\R$ with $h(r)=\tilde{h}(r^2)$.
  Therefore,
  $h(u)=\tilde{h}(|u|^2)$
  is smooth near $\{u=0\}$ also.
  The claim follows with 
  Proposition \ref{prop:charisteqn}
  and Section \ref{subsec:characteristics}
  as in the proof of Theorem \ref{thm:similar}.
  
  That the resulting contactomorphism
  restricts to a $b$-shift of the singular sets
  follows with
  Lemma \ref{lem:verticallydiffeomorphic} (ii)
  and
  Section \ref{subsec:normcogeotwists}.
  Having computed $h$
  the integrating vector field in  
  Remark \ref{rem:singsetshift}
  turns out to be Hamiltonian
  proportional to $X_{\varrho}$,
  because the Hamiltonian functions involved
  are fibred radially symmetric.
\end{proof}


\subsection{An equator preserving contact invariant\label{subsec:equatorpresvinv}}

As on the end of Section \ref{subsec:charhalflength}
consider a shape $(S,V_S,f_{\pm})$ in $\R\times T^*Q$
with shadow set $V_S$
equal to the interior of $D_{r_0}T^*Q$
and $f_{\pm}$ fibred radially symmetric.
For all boundary points $u_0$ in $S_{r_0}T^*Q$
abbreviate
\[
L^{\pm}(r):=
\int_r^{r_0}
          \frac{f_{\pm}'(\tau)}{\tau}
\rmd\tau
\]
for all $r\in[-r_0,r_0]$.
This yields curves
\[
u^{\pm}(r)=
\Psi_r(u_0)=
r\gamma_{L^{\pm}(r)}
\left(\frac{u_0}{r_0}\right)
\]
in $T^*Q$,
which lift to a parametrisation of
doubled half characteristics
\[
[-r_0,r_0]\ni r\longmapsto
\big(f_{\pm}(u^{\pm}(r)),u^{\pm}(r)\big)
\]
to the graph $S^{\pm}$ of $f_{\pm}$.
Each lift passes through the singular set for $r=0$
and intersects the equator
\[
E:=\{f_+(r_0)\}\times S_{r_0}T^*Q
\]
for $r=\pm r_0$.
The assignment of $u_0$ to
\[
u^{\pm}(-r_0)=
-r_0\gamma_{\sigma_{\pm}}
\!\left(\frac{u_0}{r_0}\right)
\,,
\quad
\sigma_{\pm}:={\pm}2\LLL(f_{\pm})
\,,
\]
defines an involution
\[
\mu_{S^{\pm}}:=
-\gamma_{\sigma_{\pm}/r_0}
\]
on the equator $E$ such that
\[
\mu_{S^{\pm}}=
\gamma_{-\sigma_{\pm}/r_0}\circ (-1)
\,.
\]
This follows from
$x\gamma_{\sigma}\circ\frac1x=\gamma_{\sigma/x}$
for all $x\in\R$ non-zero,
see Section \ref{subsec:halfchardyn}.
The case $x=-1$ is directly implied by the definition of geodesics.
The composition
\[
\mu_S:=\mu_{S^-}\circ\mu_{S^+}
\,,
\]
which assigns $u^+(-r_0)$ to $u^-(-r_0)$,
is the strict contactomorphism
\[
\mu_S=\gamma_{\Sigma/r_0}
\,,
\qquad
\Sigma:=2\Sigma(S)
\,,
\]
of $(E,\alpha^{r_0})$.

We remark that
$\gamma_{\sigma/r_0^2}$ is the Reeb flow
at time $\sigma/r_0^2$
on $S_{r_0}T^*Q$ of the contact form $\alpha^{r_0}$
obtained from restricting $\lambda_Q$
to $TS_{r_0}T^*Q$.
Denoting the Reeb vector field of $\alpha^{r_0}$
by $R_{\alpha^{r_0}}$ this implies that
$\gamma_{\sigma/r_0}$ is the flow of
$\frac{1}{r_0}X_{\varrho}=r_0R_{\alpha^{r_0}}$
on $S_{r_0}T^*Q$.
Therefore,
any diffeomorphism $\varepsilon$ of $E$
such that
  	\begin{enumerate}
 	\item[($\varepsilon_{-1}$)]
	$\varepsilon$ commutes with $(-1)$ and
 	\item[($\varepsilon_{\alpha^{r_0}}$)]
	$\varepsilon$ preserves $\alpha^{r_0}$
	\end{enumerate}
satisfies the following rule:
Conjugation of $\gamma_{\sigma/r_0}$ with
$\varepsilon\circ (-1)$ is the flow of the push forward
\[
\big(\varepsilon\circ (-1)\big)_*
\Big(\frac{1}{r_0}X_{\varrho}\Big)
=-\frac{1}{r_0}X_{\varrho}|_{(-1)}
\,,
\]
which is $-\gamma_{\sigma/r_0}\circ (-1)$.
In other words,
\[
\varepsilon\circ\mu_{S^{\pm}}=
\mu_{S^{\pm}}\circ\varepsilon
\,.
\]
Similarly,
with the only need of using ($\varepsilon_{\alpha^{r_0}}$),
we get
\[
\varepsilon\circ\mu_S=
\mu_S\circ\varepsilon
\,.
\]

\begin{proof}[{\bf Proof of Theorem \ref{thmintr:cobocotangentobstr}}]
  Let $\varphi$ be an co-orientation
  preserving contactomorphism
  from $(D_S,\xi_{\st})$ onto $(D_T,\xi_{\st})$
  that restricts to a diffeomorphism
  $\varepsilon\co E\ra E$ of the equator
  $\{f_+(r_0)\}\times S_{r_0}T^*Q$.
  By Proposition \ref{prop:phiinvonchrfol},
  $\varphi$ maps the signed singular set
  $\pm\{f_{\pm}(0)\}\times Q$ to
  $\pm\{g_{\pm}(0)\}\times Q$,
  so that $\varphi(S^{\pm})=T^{\pm}$
  and, as $\varphi$ preserves the equator,
  $\varphi$ sends the doubled half characteristics of
  $(S^{\pm})_{\xi_Q}$ to the one of $(T^{\pm})_{\xi_Q}$.
  In view of the end points we get
  \[
  \varepsilon\circ\mu_{S^{\pm}}=
  \mu_{T^{\pm}}\circ\varepsilon
  \]
  and, as $\mu_{S^{\pm}}$ and $\mu_{T^{\pm}}$ are involutive,
  \[
  \varepsilon\circ\mu_S=
  \mu_T\circ\varepsilon
  \,.
  \]
  Under the assumptions
  ($\varepsilon_{-1}$), ($\varepsilon_{\alpha^{r_0}}$)
  and ($\varepsilon_{\alpha^{r_0}}$)
  as above we get
  $\mu_{S^{\pm}}=\mu_{T^{\pm}}$ and
  $\mu_S=\mu_T$, resp.,
  applying the commutation rules.
  Therefore,
  we find integers $k_{\pm}$, $k$ such that
  \[
  \LLL(f_{\pm})=\LLL(g_{\pm})+k_{\pm}P
  \qquad
  \text{and}
  \qquad
  \Sigma(S)=\Sigma(T)+kP
  \,,
  \]
  resp., where $2P/r_0$ is the minimal period
  of $\gamma_{\sigma}$ on $S_{r_0}T^*Q$,
  in case $Q$ is Besse,
  see Wadsley \cite{wad75};
  reading $P=0$ for $Q$ non-Besse.
  This proves (i) and (ii).
  
In view of (iii) we define
a strict contactomorphism
\[
\kappa_{S^{\pm}}:=
\gamma_{\tau_{\pm}/r_0}
\,,
\quad
\tau_{\pm}:={\pm}\LLL(f_{\pm})
\,,
\]
on the equator $(E,\alpha^{r_0})$,
which corresponds to the assignment of
\[
\kappa_{S^{\pm}}(u_0)=
r_0\gamma_{\tau_{\pm}}
\!\left(\frac{u_0}{r_0}\right)
\]
to $u_0$ similarly to the above considerations.
Observe that
$\pi\circ\kappa_{S^{\pm}}(u_0)=u^{\pm}(0)$
and that $\big(f_{\pm}(0),u^{\pm}(0)\big)$
is a point of the singular set $\{f_{\pm}(0)\}\times Q$,
which gets mapped onto $\{g_{\pm}(0)\}\times Q$ by
$\varphi|_{\{f_{\pm}(0)\}\times Q}=\big(g_{\pm}(0),\iota\big)$.
Preservation of characteristics yields
\[
\varphi\circ
\big(f_{\pm}(0),\pi\circ\kappa_{S^{\pm}}\big)
=
\big(g_{\pm}(0),\pi\circ\kappa_{T^{\pm}}\big)
\circ\varphi
\]
restricted to the equator $E$,
which is
\[
\Big(g_{\pm}(0),\iota\circ\pi\circ\kappa_{S^{\pm}}\Big)
=
\Big(g_{\pm}(0),\pi\circ\kappa_{T^{\pm}}\circ\varepsilon\Big)
\,.
\]
Using the assumption $\iota\circ\pi=\pi\circ\varepsilon$
and that $\varphi|_E=\varepsilon$ commutes with
$\gamma_{\tau_{\pm}/r_0}$ by ($\varepsilon_{\alpha^{r_0}}$)
we get
$\pi\circ\kappa_{S^{\pm}}=\pi\circ\kappa_{T^{\pm}}$.
This implies that
\[
\pi\circ\gamma_{\pm(\LLL(f_{\pm})-\LLL(g_{\pm}))/r_0}
=\pi
\]
on $E$ meaning that
$\gamma_{\pm(\LLL(f_{\pm})-\LLL(g_{\pm}))/r_0}$
preserves the fibres of $\pi\co E\ra Q$.
If $\LLL(f_{\pm})-\LLL(g_{\pm})$ is not zero
then all points $q$ in $Q$ have the property,
that all geodesics starting at $q$ come back to $q$
after time $\pm(\LLL(f_{\pm})-\LLL(g_{\pm}))/r_0$.
By \cite[Proposition 7.9(b)]{bes78},
which is based on Wadsley's theorem \cite{wad75},
the flow $\gamma_{\tau}$ on $E$ is periodic
with minimal period $2P/r_0$, say.
Observe,
that transverse self-intersection points
of a given geodesic $q=q(\tau)$ in $Q$ are finite.
Indeed,
the set of $(\tau_1,\tau_2)$ in $\R\times\R$
such that $q(\tau_1)=q(\tau_2)$
but $\dot q(\tau_1)\neq\dot q(\tau_2)$ are isolated
using metric balls of radius equal to the convexity radius of $Q$
about transverse self-intersections of $q=q(\tau)$,
cf.\ \cite[Lemma 7.10]{bes78}.
Taking a point $q$ on a geodesic of $Q$
with minimal period $2P/r_0$
that is not a transverse self-intersection point
shows that there exists integers $k_{\pm}$ with
  \[
  \LLL(f_{\pm})=\LLL(g_{\pm})+2k_{\pm}P
  \,.
  \]
This shows (iii).
\end{proof}


\section{Shadows in symplectisations\label{sec:shadowsympl}}

We address Theorem \ref{thmintr:contactbodiessympl}.


\subsection{Stein type symplectisations}
\label{subsec:steintypesympl}

Recall the situation from the beginning of
Section \ref{subsec:symmsteinshadow}.
Assume
that the strict closed contact manifold $(M,\alpha)$
considered in
Theorem \ref{thmintr:contactbodiessympl}
appears as boundary
of $\{\varrho<\varrho_*\}\subset V$
with contact form $\alpha$ being the restriction
of the Liouville primitive $\lambda=-\rmd\varrho\circ J$
to the tangent spaces of $\{\varrho=\varrho_*\}$.
Then a Liouville symplectic embedding
$\Psi(a,w):=\psi_a(w)$
of $(\R\times M,\rme^a\alpha)$
into $(V,\lambda)$
is defined by following the Liouville flow
$\psi_a$ of $Y_{\lambda}$.
Notice that $\Psi^*Y_{\lambda}=\partial_a$
is the $\rmd(\rme^a\alpha)$-dual Liouville vector field.

Assume further
that $\Psi^*\varrho\equiv\varrho(a)$ is an
$\R$-symmetric function on the symplectisation 
$(\R\times M,\rme^a\alpha)$
as it is the case in \cite[Section 5.5]{kwz22}
and (taking $\varrho_*=1$)
  	\begin{enumerate}
 	\item[(r2n)]
	for Euclidean spaces as in the {\it if part}
	of proof of Theorem \ref{thmintr:contactbodies}
	in Section \ref{subsec:symmsteinshadow},
	where $\varrho(a)=\rme^a$, or
 	\item[(t*q)]
	for cotangent bundles as described in
	Section \ref{subsec:tamegeom},
	where $\varrho(a)=\rme^{2a}$.
	\end{enumerate}
Writing $I=\Psi^*J$ we get
\[
\rme^a\alpha=-\varrho'(a)\rmd a\circ I
\,,
\]
with $\varrho'(a)$ necessarily positive,
and, therefore,
\[
I\partial_a
=X_{\varrho(a)}
=\tfrac{\varrho'(a)}{\rme^a}R_{\alpha}
\]
denoting the Reeb vector field of $\alpha$
by $R_{\alpha}$.
This yields
\[
\rme^a\alpha(X_{\varrho(a)})
=\partial_a(\varrho(a))
=\varrho'(a)
\]
and
\[
[\partial_a,X_{\varrho(a)}]
=\Big(
\tfrac{\varrho''(a)}{\varrho'(a)}-1
\Big)X_{\varrho(a)}
\,.
\]
The $\Psi$-relatum of both formulas
was used in the (t*q)-case
in Section \ref{subsec:halfchardyn}, resp.

Consider the induced strict contact embedding
$\id\times\,\Psi$
of the related contactisations.
Let $(S,V_S,f_{\pm})$ be a shape
contained in the image
such that $f_{\pm}=\hat{f}_{\pm}(\varrho)$
with $V_S=\{\varrho_1<\varrho<\varrho_0\}$
for real numbers $\varrho_1,\varrho_0$.
The preimage $(\id\times\,\Psi)^{-1}(S)$
defines a shape with shadow set
$\Psi^{-1}(V_S)=\{a_1<a<a_0\}$
for $a_{1/0}=\varrho^{-1}(\varrho_{1/0})$
and $\R$-symmetric
$\Psi^*f_{\pm}\equiv\hat{f}_{\pm}\big(\varrho(a)\big)$.
The function $\Psi^*h\equiv\hat{h}\big(\varrho(a)\big)$
related to the solution of the Moser problem from
Section \ref{subsec:symmsteinshadow} becomes
  \[
    \hat{h}\big(\varrho(s+a_0)\big)=
    \rme^s
        \int_s^0
          \rme^{-\sigma}
          \Big(
          	(\hat{g}_{\pm}-\hat{f}_{\pm})
		\big(\varrho(\sigma+a_0)\big)\Big)'\,
        \rmd\sigma
  \]
for $s\in(a_1-a_0,0)$,
which of course resamples
earlier computations done in the cases
(r2n) and (t*q).


\subsection{Action and characteristics}
\label{subsec:acandchar}

We consider $\R\times M$
equipped with $\rme^a\alpha$
and define a (non-proper) $\R$-symmetric potential
$\varrho(a)=\rme^a$.
Observe that
$-\rmd\varrho\circ I=\rme^a\alpha$
or, equivalently, that $\alpha=-\rmd a\circ I$,
for a translation invariant almost complex structure
$I$ on $\R\times M$ that restricts to a
$\rmd\alpha$-compatible complex structure
on $\ker\alpha$
such that $I\partial_a=R_{\alpha}$.

Let $(S,V_S,f_{\pm})$ be a shape
in $\big(\R\times(\R\times M),\rmd b+\rme^a\alpha\big)$
with $V_S=(a_1,a_0)\times M$
and $f=f_{\pm}$ being $\R$-symmetric.
Compute the Hamiltonian
\[
X_f=f'(a)X_a=\rme^{-a}f'(a)R_{\alpha}
\]
using chain rule.
Hence,
the flow of the nowhere singular vector field
$\partial_a-X_f$ equals
\[
\psi_s(a,w)=
\big(
s+a,
\eta_{\sigma_a(s)}(w)
\big)
\,,
\]
where $\eta_{\sigma}$
denotes the Reeb flow of $\alpha$
and
\[
\sigma_a(s)=\int_{s+a}^a
        \rme^{-\tau}f'(\tau)\,
        \rmd\tau
\,.
\]
The flow of $\pm(\partial_a-X_{f_{\pm}})$
lifts to a paramatrisation of characteristics
on $S^{\pm}$.
As the contact structure $\xi_{\alpha}$
on $\R\times(\R\times M)$
is nowhere horizontal
all characteristics intersect the equator
$E=\{a_1,a_0\}\times M$ transversally
connecting the two components of $E$
in finite time.
If follows
that the two sided limit of
\[
\AAA(f_{\pm})=
\pm\int_{a_1}^{a_0}
\rme^{-\tau}
f'_{\pm}(\tau)\,
\rmd\tau
\]
exists and is finite,
cf.\ Section \ref{subsec:charhalflength}.
The quantity $\AAA(f_{\pm})$
is the $\alpha$-action
of a Reeb arc in $M$
obtained by projecting
an equator connecting characteristic on $S^{\pm}$
intersecting $\{a_1\}\times M$ and $\{a_0\}\times M$
exactly once.

\begin{proof}
[{\bf Proof of Theorem \ref{thmintr:contactbodiessympl} (a)}]
Consider the $\R$-symmetric function
$g:=g_{\pm}-f_{\pm}$ defined on $(a_1,a_0)\times M$.
Observe that the function $g=g(a)$
has support in $(a_1,a_0)$.
As argued in Section \ref{subsec:symmsteinshadow},
we work with the flow
$\psi_s(a,w)=(s+a,w)$ of $\partial_a$.
For an initial point $(a_0,w)\in\{a_0\}\times M$
consider the flow line
$c(s)=(s+a_0,w)$ for all $s\in(a_1-a_0,0)$.
For the points $(a,w)\in(a_1,a_0)\times M$
write $a=s+a_0$ and define
$h_{\pm}(a,w)=h(a)$ by
  \[
    h(a)=
    \rme^s
        \int_s^0
          \rme^{-\sigma}
          \big(
          	g(\sigma+a_0)\big)'\,
        \rmd\sigma
        \,.
  \]
The substitution $\tau=\sigma+a_0$,
which gives $a=s+a_0$, leads to
  \[
    h(a)=
    \rme^a
        \int_a^{a_0}
        \rme^{-\tau}g'(\tau)\,
        \rmd\tau
        \,.
  \]
  In order to ensure smoothness 
  of the desired Hamiltonian of the Moser porblem
  as in Proposition \ref{prop:charisteqn}
  caused by the branches $h_+$ and $h_-$
  near the lower boundary $\{a_1\}\times M$
  we need to require $h_+(a_1)=h_-(a_1)$
  or, equivalently,
  \[
        \int_{a_1}^{a_0}
        \rme^{-\tau}
        \big(g_+-f_+\big)'(\tau)\,
        \rmd\tau
        =
        \int_{a_1}^{a_0}
        \rme^{-\tau}
        \big(g_--f_-\big)'(\tau)\,
        \rmd\tau
  \]
  ensured by the assumption
  $\TT(S)=\TT(T)$
  on the total characteristic actions.
  As in the proof of Theorem \ref{thm:similar}
  the claim follows with 
  Proposition \ref{prop:charisteqn}
  and Section \ref{subsec:characteristics}.
  The statement about the support
  follows using Remark \ref{rem:singsetshift}.
\end{proof}


\subsection{Reeb twists\label{subsec:reebtwists}}

As in Section \ref{subsec:normcogeotwists}
consider the Reeb flow $\eta_{\theta}$ of $(M,\alpha)$,
which satisfies
$\eta_{\theta}^*\alpha=\alpha$.
Take a smooth function $\theta\co\R\ra\R$
such that $\theta$ is constant
near $(-\infty,a_1]\cup[a_0,\infty)$
with $\theta(a)_{1/0}=\theta_{1/0}$, say,
and $\Theta(a_1)=0$, where
  \[
  \Theta(a):=
  -\int_{a}^{a_0}\rme^{\tau}\theta'(\tau)\rmd\tau
  \]
having support in $(a_1,a_0)$.
Define an exact symplectomorphism
  \[
  \Phi:=\id\times\,\eta_{\theta}
  \]
on $\R\times M$.
Indeed, the inverse is given by
$\Phi^{-1}=\id\times\,\eta_{-\theta}$.
The linearisation is computed to be
  \[
  T\Phi=
  \mathbb{I}\oplus
  \Big(
  T\eta_{\theta}+
  \rmd\theta\cdot R_{\alpha}|_{\Phi}
  \Big)
  \,,
  \]
  so that
  \[
  \Phi^*(\rme^a\alpha)=
  \rme^a\alpha+\rmd\Theta
  \,.
  \]
Define the {\bf Reeb twist} by
  \[
  \varphi(b,a,w)
  :=\big(b-\Theta(a),a,\eta_{\theta(a)}(w)\big)
  \]
for all $(b,a,w)\in\R\times\R\times M$.
Then $\varphi$ preserves the contact form
$\rmd b+\rme^a\alpha$, equals
$(b,a,w)\mapsto\big(b,a,\eta_{\theta_{1/0}}(w)\big)$
for $a$ close to $a_{1/0}$ and
sends any $\R$-symmetric shape
$(S,V_S,f_{\pm})$ with $V_S=(a_1,a_0)\times M$
to the equivalent $\R$-symmetric shape $(T,V_T,g_{\pm})$
preserving the equator $\{a_1,a_0\}\times M$,
where $g_{\pm}=f_{\pm}-\Theta$.
The characteristic action difference is
  \[
  \AAA(f_{\pm})-\AAA\big(f_{\pm}-\Theta\big)
  =\pm\int_{a_1}^{a_0}\theta'(\tau)\rmd\tau
  =\pm(\theta_0-\theta_1)
  \,,
  \]
so that the Reeb twist
preserves the total characteristic action
\[
\TT(T)=
\AAA\big(f_+-\Theta\big)+
\AAA\big(f_--\Theta\big)=
\TT(S)
\,.
\]


\subsection{Equator monodromy}\label{subsec:eqmono}

We assume the situation of Section \ref{subsec:acandchar}.
As in Section \ref{subsec:equatorpresvinv}
we consider curves in $(a_1,a_0)\times M$
whose lift to $S^{\pm}$ parametrises a characteristic
that starts and ends at one of the two components
of the equator $\{a_1,a_0\}\times M$ and
intersects the second component exactly once.
The projection of the end points and the starting points to $M$,
going backwards thought the characteristic arcs as described
in time, yields a map
\[
\kappa_S:=
\eta_{\TT(S)}
\,.
\]
The map $\kappa_S$
is a strict contactomorphism
of $(M,\alpha)$ given by the time $\TT(S)$-map
of the Reeb flow $\eta_{\theta}$.
The intersections of the characteristic arcs
with the closure of $S^{\pm}$
define strict contactomorphisms
\[
\kappa_{S^{\pm}}:=
\eta_{\AAA(f_{\pm})}
\]
of $(M,\alpha)$.
The maps $\kappa_S, \kappa_{S^{\pm}}$
commute with strict contactomorphisms
$\varepsilon$ of $(M,\alpha)$.

\begin{proof}[{\bf Proof of Theorem
\ref{thmintr:contactbodiessympl} (b)}]
  Starting with (i)
  let $\varphi$ be an co-orientation
  preserving contactomorphism
  from $(D_S,\xi_{\alpha})$ onto $(D_T,\xi_{\alpha})$
  that restricts to a strict contactomorphism
  $\varepsilon_{1/0}$ of the component
  $\{a_{1/0}\}\times M$ of the equator
  preserving the co-orientation.
  By Proposition \ref{prop:phiinvonchrfol},
  $\varphi$ sends the oriented characteristics of
  $S_{\xi_{\alpha}}$ to the one of
  $T_{\xi_{\alpha}}$ so that
  $
  \varepsilon_{1/0}\circ\kappa_S=
  \kappa_T\circ\varepsilon_{1/0}
  $.
  Commutation yields $\kappa_S=\kappa_T$,
  which shows (i).
  
  In order to prove (ii)
  observe that $\varphi$ restricts to
  a co-orientation preserving strict contactomorphism
  on the equator $\{a_1,a_0\}\times M$
  not interchanging its components.
  This implies $\varphi(S^{\pm})=T^{\pm}$. 
  Moreover,
  $\varphi$ restricts to the same map
  $\varepsilon=\varepsilon_{1/0}$
  on each component of $\{a_{1/0}\}\times M$
  identified with $M$.
  In view of the end points of the
  characteristics of $(S^{\pm})_{\xi_{\alpha}}$
  and their $\varphi$-images $(T^{\pm})_{\xi_{\alpha}}$
  we see that
  $\varepsilon\circ\kappa_{S^{\pm}}=
  \kappa_{T^{\pm}}\circ\varepsilon$,
  see Proposition \ref{prop:phiinvonchrfol}.
  As $\varepsilon$ preserves
  the Reeb vector field of $(M,\alpha)$
  we get $\kappa_{S^{\pm}}=\kappa_{T^{\pm}}$, resp.,
  by commutation.  
  This proves (ii).
\end{proof}

\begin{rem}
\label{rem:nongirouxconvex}
  The inverse of the strict contactomorphism $\kappa_S$
  allows an identification of $S$ with the mapping torus
  $S(\kappa_S^{-1})$.
  Indeed,
  following the $M$-invarinat characteristic flow lines
  starting on a component of the equator
  yields a diffeomorphism onto the quotient
  of $\R\times M$ by the action
  $(t,m)\mapsto\big(t-1,\kappa_S^{-1}(m)\big)$.
  Characteristics will correspond to the curves
  $t\mapsto[(t,m)]$.
  In particular,
  $S$ cannot be Giroux-convex with respect to $\xi_{\alpha}$.
  
  Otherwise,
  we would find a smooth function on $S$,
  whose non-empty zero set $\Gamma$ is positively
  transverse to $S_{\xi_{\alpha}}$,
  see Remark \ref{rem:divset}.
  As $\{0\}\times M$ does not separate in $S(\kappa_S^{-1})$
  by an open-closed argument $\Gamma$ cannot be contained
  in $\{0\}\times M$.
  Hence,
  the set $A_k=(\kappa_S^k)(A_0)$ of all $m\in M$
  such that there exists $t\in(k,k+1)$ with $(t,m)\in\Gamma$
  for given integer $k$ is non-empty and open.
  As each characteristic intersects $\Gamma$
  at most once the $A_k$ are pairwise disjoint.
  In addition,
  the $A_k$ are of equal $\alpha$-contact volume.
  This is not possible.
  \end{rem}


\begin{ack}
  The content of this work is part of a lecture course
  given in 2024/25 at RUB
  and was caused by discussion in the A5/C5 Seminar.
  We would like to thank all the members
  Johanna Bimmermann,
  Florian Buck,
  Lars Kelling,
  Christopher Schmidt,
  Bernd Stratmann,
  Manuel Stange
  and Anton Wilke
  for their contributions.
  We thank Alberto Abbondandolo,
  Barney Bramham,
  Boto von Querenburg and Stefan Suhr.
\end{ack}


\end{document}